\setlist[1]{itemsep=0pt}	
\newtheorem{thm}{Theorem}
\newtheorem*{thm*}{Theorem}
\newtheorem{dfn}[thm]{Definition}
\newtheorem{lem}[thm]{Lemma}
\newtheorem{prop}[thm]{Proposition}
\newtheorem{cor}[thm]{Corollary}
\newtheorem{fact}[thm]{Fact}
\theoremstyle{definition}
	\newtheorem{ex}[thm]{Example}
	\newtheorem{remark}[thm]{Remark}
\numberwithin{thm}{section}
\def\R{\mathbb{R}}
\def\N{\mathbb{N}}
\def\A{\mathcal{A}}
\def\F{\mathcal{F}}
\def\sse{\subseteq}
\def\dom{\operatorname{dom}}
\def\level{\operatorname{level}}
\newcommand\set[2]{ \left\{ \, #1 \, : \, #2 \, \right\} }
\newcommand\ignore[1]{}
\newcommand\grouppres[2]{ \left\langle \, #1 \, \middle| \, #2 \, \right\rangle }
\title{Almost $\mathcal{R}$-trivial monoids are almost never Ramsey}
\author{
  Khatchatourian, Ivan\\
  \texttt{Toronto}
	\and
  Pawliuk, Micheal\\
  \texttt{Calgary}
}
\date{\today}
\begin{document}
\maketitle

\begin{abstract}
	Recent results have generalized Gowers' Theorem (to Lupini's Theorem \cite{lupini}) and the Furstenberg-Katznelson theorem \cite{solecki}, both infinite dimensional Ramsey Theorems. The framework of \cite{solecki} provides a machine which accepts (almost $\mathcal{R}$-trivial) monoids and outputs Ramsey theorems. The major generalization was to extract monoid actions from these theorems.

	We investigate the other direction, and feed into the machine monoids which appear ``naturally'', and which are not extracted from a Ramsey theorem, such as such as $0$-Hecke monoids and hyperplane face monoids. Most examples of monoids coming from geometry will not be Ramsey monoids. We provide a simple combinatorial condition for checking this that goes through the representation of almost $\mathcal{R}$-trivial monoids as a family of (almost) regressive functions.

	Except in extremely small or trivial cases, the naturally occurring monoids will not be Ramsey.
\end{abstract}


\ignore{Done:

To-do list:

\begin{enumerate}
	\item In the conclusion, include the example of the tetris operation and Gowers' theorem. How does thinking about the tetris operation as ``information compression'' relate to Gowers' theorem (really the application to banach spaces)? Is there a way to look at the monoid of the tetris operation ahead of time and see that it should say something about information compression in banach spaces?
\end{enumerate}

\begin{enumerate}
	\item State a version of Solecki's Ramsey theorem in Section 2. (Remember Ivan already wrote a separate document about it. Integrate that in a readable way.) \textcolor{cyan}{Done, kind of. Stating the partial theorem you get for monoids with non-linear $X(M)$s would take a loooot of space...}
	\item write the proof for the condition of comparability in the function representation of $X(M)$ (Section 3.2)
	\item Prove Theorem \ref{thm:all_regressive_XM_not_linear}.
	\item Give the correct version of Theorem \ref{thm:k_Lip_regressive_XM_not_linear} (and prove it).
	\item Add in relevant definitions about posets and lattices.
	\item Edit Section 3 and 4.1 and remove any unnecessary observations. Make it a bit more formal.
	\item Code: Code is on Github, the Coxeter stuff is explained.
	\item Code: Figure out a way to mention it in the paper. (Intro to section 4.)
	\item State what the theorems in section 3 say about $\mathcal{R}$-trivial monoids. What are the results for almost $\mathcal{R}$-trivial monoids?
	\item Add stuff to section 3 about the class of all regressive functions (on posets), not just regressive \textit{order-preserving} ones.
	\item Include how lattices apply to \ref{thm:R_trivial_rep}. (J-trivial is reg, OP maps into a poset, not nec. a Lattice.)
	\item Write Introduction.
	\item Edit Section 4.2 and remove any unnecessary observations and colloquial language (eg. the first sentence, the "(apparently)" on pp15, etc.). Make it a bit more formal. Also maybe add an example to Remark 4.9.
	\item Fix the alignment of the picture in the 0-Hecke section 4 and add table lines.
	\item Bold definition words instead of emph-ing them.
	\item Code stuff:
	\begin{enumerate}
		\item Clean up our code. (\textit{Mostly done. The hyperplane stuff needs tidying.})
		\item Document it and put it on github. (\textit{Mostly done. The hyperplane stuff needs explaining.})
	\end{enumerate}
\end{enumerate}}

\ignore{\tableofcontents}

\section{
	\texorpdfstring{
		Introduction
	}{
		\ref{sec:intro}. Introduction
	}
}
\label{sec:intro}

A recent paper \cite{solecki} by Solecki has made a connection between infinitary Ramsey theory and (finite) monoid theory. The classical theorems of Furstenberg-Katznelson and Gowers \cite{gowers1992lipschitz}, as well as a recent result by Lupini \cite{lupini}, were all seen to be special cases of a common result with a common proof technique. The main things that distinguish these results are the monoids that measure simplification in each context, and how strong of a Ramsey result is achieved. Solecki was able to show that the full strength of the expected Ramsey result depended precisely on the structure of a well-studied object associated with the monoid $M$: its partial order of left cosets $X(M)$ (ordered by inclusion). More precisely, the result was equivalent to $X(M)$ being linear. In essence, \cite{solecki} produces a machine that takes in a certain kind of monoid and produces new Ramsey results.

The focus of this paper will be to go in a sort of reverse direction to \cite{solecki}. That paper extracted monoids from known Ramsey results, whereas we will take known monoids and see what sorts of results we get. The known Ramsey results to which this machinery has been applied (like Gowers' theorem, the Furstenberg-Katznelson theorem, and Lupini's theorem \cite{lupini}) typically have rather small monoids associated with them. In the case of Gowers' theorem and Lupini's theorem, the associated monoid actually gives the full strength of the expected Ramsey result (so the monoid is called a \textbf{\emph{Ramsey monoid}}). In the Furstenberg-Katznelson theorem, the monoid is not Ramsey, but we still salvage some amount of the expected Ramsey result.

The natural class of monoids to use in Solecki's Ramsey machine is the class of \textbf{\emph{almost $\mathcal{R}$-trivial}} monoids. These are a slight generalization of \textbf{\emph{$\mathcal{R}$-trivial}} monoids, the latter of which is a well studied class. The more general class is precisely the class for which Solecki's result holds (that a monoid $M$ is Ramsey if and only if $X(M)$ is linear), and was introduced in \cite{solecki}. Despite its recent introduction, it is a very natural class which we make precise in a moment.

Surprisingly (to the authors), almost $\mathcal{R}$-trivial monoids are typically not Ramsey. In fact, these monoids are only Ramsey in extremely rare cases. We make this precise in two ways. First, there is a collection of representation theorems for $\mathcal{R}$-trivial monoids (and slight variations thereof) which are of the flavour ``a monoid can be represented as the collection of all regressive functions on a poset''. It turns out that almost $\mathcal{R}$-trivial monoids have a very natural representation theorem that is a slight variation of the folklore representation theorem for $\mathcal{R}$-trivial monoids. 

\begin{thm*}
	Every finite almost $\mathcal{R}$-trivial monoid can be represented as the monoid of all regressive functions on some finite poset, together with constant functions that achieve a minimal values on that poset. The operation is function composition.
\end{thm*}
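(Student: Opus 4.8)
The plan is to reduce the statement to the folklore representation theorem for $\mathcal{R}$-trivial monoids by isolating the single feature that distinguishes the almost $\mathcal{R}$-trivial case. First I would record the structural lemma that controls everything: in a finite almost $\mathcal{R}$-trivial monoid $M$, every $\mathcal{R}$-class is a singleton except possibly one, and that exceptional class $Z$ consists of \emph{right zeros}, i.e.\ elements $z$ with $xz=z$ for all $x\in M$. The facts I need about $Z$ are short computations from the definition: it is a two-sided ideal; each of its elements is a $\le_{\mathcal{R}}$-least element of $M$ (since $z=xz\in xM$ gives $z\le_{\mathcal{R}}x$ for every $x$); and $(Z,\cdot)$ is a right-zero subsemigroup, so all its elements are mutually $\mathcal{R}$-equivalent. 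This is exactly the place, and the only place, where the preorder $\le_{\mathcal{R}}$ fails to be antisymmetric. Under the composition convention used for the target monoid, these right zeros are precisely what must become constant functions onto minimal elements.

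Next I would pass to the Rees quotient $\overline{M}=M/Z$, collapsing $Z$ to a single two-sided zero $0$. That this is a monoid congruence is routine once $Z$ is known to be an ideal. Moreover the quotient does not merge any of the remaining $\mathcal{R}$-classes: if $\overline{aM}=\overline{bM}$ with $\overline{a}\neq 0$, then $a\in aM\setminus Z$ survives in $\overline{bM}$, forcing $a\in bM$ and symmetrically $b\in aM$, so $a\,\mathcal{R}\,b$ in $M$ and hence $a=b$ by almost $\mathcal{R}$-triviality. Thus $\overline{M}$ is genuinely $\mathcal{R}$-trivial, and I may apply the folklore representation theorem for the $\mathcal{R}$-trivial case to obtain a finite poset $\overline{P}$ and a faithful representation $\overline{\phi}$ of $\overline{M}$ by regressive (order-preserving) functions. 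Since $0$ is a two-sided zero it must be sent to a constant function, and regressivity forces that constant onto the global minimum $\widehat{0}$ of $\overline{P}$; in particular $\overline{P}$ acquires a least element.

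Finally I would \emph{re-expand} the bottom. Form $P$ from $\overline{P}$ by deleting $\widehat{0}$ and gluing in an antichain $\{p_z:z\in Z\}$ of new minimal elements, each placed below exactly what $\widehat{0}$ was below. Define $\phi\colon M\to\operatorname{Func}(P)$ by sending each right zero $z$ to the constant function $c_{p_z}$ onto its minimal element, and sending each $m\notin Z$ to the function that agrees with $\overline{\phi}(\overline{m})$ away from the bottom while redirecting any output equal to $\widehat{0}$ to the appropriate $p_z$ determined by the genuine right action $u\mapsto um\in Z$ of $M$ on its ideal; on the new minimal elements set $\phi(m)(p_z)=p_{zm}$, which is well defined because $zm\in Z$. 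One then verifies that $\phi$ is a faithful monoid homomorphism, that each $\phi(m)$ with $m\notin Z$ is regressive (order-preservation being inherited from $\overline{\phi}$), and that the right zeros are carried exactly onto the promised constant functions to minimal values.

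The hard part will be the redirection step and the accompanying homomorphism check. Order-preservation and regressivity of the upper part come for free from the folklore representation of $\overline{M}$, but the data of \emph{which} minimal element an output should be sent to is precisely what the quotient $\overline{M}$ discards; it must be reinstated from the right action of $M$ on the right-zero ideal $Z$. The technical heart is therefore a compatibility lemma asserting that this right action refines $\overline{\phi}$ coherently, so that composing the upper-part functions with the constants $c_{p_z}$ reproduces multiplication in $M$ even on products that fall into $Z$. Equivalently, and perhaps more cleanly, one argues that mimicking the folklore construction directly on $(M,\le_{\mathcal{R}})$ goes through verbatim once the mutually $\mathcal{R}$-equivalent right zeros are separated into an antichain of minimal elements and represented by constants, since antisymmetry of $\le_{\mathcal{R}}$ is the only hypothesis of the folklore argument that almost $\mathcal{R}$-triviality weakens.
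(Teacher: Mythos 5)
The paper omits its own proof of this theorem (saying only that it is ``very similar to the folklore result''), so your proposal can only be judged on its own terms, and it has a genuine gap at exactly the point you yourself flag as the technical heart. The structural analysis of the class $Z$ of right zeros, the Rees quotient $\overline{M}=M/Z$, and the $\mathcal{R}$-triviality of $\overline{M}$ are all correct. The failure is in the re-expansion: you replace $\widehat{0}$ by an antichain $\{p_z : z\in Z\}$ of minimal elements and set $\phi(m)(p_z)=p_{zm}$ for $m\notin Z$. Almost $\mathcal{R}$-triviality gives $xz=z$, which constrains multiplication on the \emph{other} side; nothing forces $zm=z$. Whenever $zm\neq z$, the point $p_{zm}$ is a minimal element incomparable with $p_z$, so $\phi(m)$ is neither regressive nor constant, and the ``verbatim'' transfer of the folklore argument breaks precisely because you have separated the right zeros into an antichain.

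This is not a gap that more care closes; the deferred compatibility lemma is false. Take $M=\{1,m,z_1,z_2\}$ with $m^2=m$, $xz_i=z_i$ for all $x$, $mz_i=z_i$, $z_iz_j=z_j$, $z_1m=z_2$, $z_2m=z_2$ (associativity is verified via the right regular representation; the $\mathcal{R}$-classes are $\{1\},\{m\},\{z_1,z_2\}$, so $M$ is almost $\mathcal{R}$-trivial). Use the composition convention under which the folklore theorem's ``moreover'' clause actually holds, namely $\phi(ab)=\phi(b)\circ\phi(a)$ (under the opposite convention the monoid of \emph{all} regressive functions on a $3$-chain is not even $\mathcal{R}$-trivial: on $\{a<b<c\}$ the maps $g\colon c\mapsto a$ and $h\colon c\mapsto b$ satisfy $g\circ h=h$ and $h\circ k=g$ for $k\colon c\mapsto a$). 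Suppose $\phi$ is faithful with each value regressive or a minimal constant, and write $A=\phi(m)$, $B_i=\phi(z_i)$; the relations give $B_2=A\circ B_1$ and $B_1=B_1\circ B_2$. If all three are regressive, then for every $r=B_1(p)$ we get $r=B_1(B_2(p))=B_1(A(r))\le A(r)\le r$, so $A$ fixes the range of $B_1$ and $B_2=A\circ B_1=B_1$, contradicting faithfulness. If $B_1$ is a constant onto a minimal point $p_1$, then $B_2$ is the constant onto $A(p_1)$, which forces $B_2=B_1$ when $A$ is regressive and $B_2=A$ when $A$ is a minimal constant; the remaining cases reduce to these. So this $M$ admits no representation of the stated form: the theorem requires an additional hypothesis such as $zm=z$ for all $z\in Z$ and $m\notin Z$ (under which your construction --- indeed the plain right regular representation with $Z$ an antichain at the bottom --- does go through), or else a larger class of functions. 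Your proof cannot succeed without confronting this.
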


We investigate monoids represented in similar ways and give classifications of when they produce a linear $X(M)$. For example:

\begin{thm*}
	Let $M = \mathcal{F}(P)$ be the monoid of all regressive functions on a finite poset $P$. Then $X(M)$ is linear if and only if $P$ is a collection of mutually incomparable points together with at most one chain of length $2$.
\end{thm*}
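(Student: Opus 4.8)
The plan is to reduce everything to the comparability criterion for $X(M)$ in its function representation (Section~3.2). For $M = \F(P)$ this criterion says that $f$ lies weakly below $g$ in $X(M)$ exactly when $f$ is obtained from $g$ by post-composition with a regressive function, i.e. $f = m \circ g$ for some $m \in M$. Since post-composing with a regressive function only pushes values downward, this forces $f(x) = m(g(x)) \le g(x)$ for every $x \in P$; in particular, any two comparable elements of $X(M)$ must be pointwise comparable as functions. This is the only feature of the order I will need. It is also convenient to restate the target: $P$ is a set of mutually incomparable points together with at most one $2$-chain precisely when the comparability graph of $P$ has at most one edge, i.e. when $P$ has at most one comparable pair.

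For the direction $(\Leftarrow)$, suppose $P$ has at most one comparable pair. If $P$ is an antichain, every point is minimal, the only regressive function is the identity, and $X(M)$ is a single point. If $P$ has exactly one comparable pair $a < b$, then every point other than $b$ is minimal and $b$ may only be sent to $a$ or to itself, so $M = \{\mathrm{id}, g\}$, where $g(b) = a$ and $g$ fixes all other points. As $\mathrm{id}$ is always the maximum of $X(M)$ (every regressive $m$ equals $m \circ \mathrm{id}$), $X(M)$ is a chain with at most two elements and is therefore linear.

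For $(\Rightarrow)$ I argue contrapositively: assuming $P$ has at least two distinct comparable pairs, I exhibit two $X(M)$-incomparable regressive functions. For a comparable pair $u < v$, let $c_{u,v}$ be the regressive function sending $v \mapsto u$ and fixing every other point. If two comparable pairs have distinct tops, say $a < b$ and $c < d$ with $b \ne d$, then $c_{a,b}(b) = a < b = c_{c,d}(b)$ while $c_{c,d}(d) = c < d = c_{a,b}(d)$, so the two functions cross and are pointwise incomparable. Otherwise every comparable pair has a single common top $v$; then there are two distinct elements $u_1, u_2$ with $u_1, u_2 < v$, and $u_1, u_2$ must be incomparable in $P$ (were $u_1 < u_2$, this would be a comparable pair with top $u_2 \ne v$). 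Now $c_{u_1,v}(v) = u_1$ and $c_{u_2,v}(v) = u_2$ are incomparable values at $v$, so again neither function is pointwise below the other. In both cases the pointwise-comparability consequence above shows the two functions are incomparable in $X(M)$, so $X(M)$ is not linear.

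The only place that requires care is this last direction. One must check that the two-case split (two comparable pairs with different tops, versus all comparable pairs sharing one top) is exhaustive, and that in each case the displayed collapse functions are regressive and fail to be pointwise comparable irrespective of incidental coincidences among $a, b, c, d$ or of further order relations that transitivity may force elsewhere in $P$. Given the criterion from Section~3.2, however, no explicit computation of principal ideals or $\mathcal{R}$-classes is needed, and the ``factors through'' part of the comparability criterion is never invoked.
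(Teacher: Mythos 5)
Your proof is correct, and its engine is the same as the paper's: pairs of ``collapse one edge, fix everything else'' maps, with regressivity providing the obstruction to one lying in the other's coset. Still, the two arguments are organized quite differently, and yours is the more complete one. The paper splits into ``$P$ contains a $3$-chain'' versus ``no $3$-chains,'' and in the latter case merely asserts that ``slight modifications'' handle two disjoint edges, two edges sharing a top, and two edges sharing a bottom; your dichotomy --- two comparable pairs with distinct tops versus all comparable pairs sharing a single top --- is genuinely exhaustive and treats all of those configurations uniformly, including the shared-top case, where the obstruction is that the two collapsed values $u_1, u_2$ are incomparable rather than that the functions ``cross.'' You also prove the easy direction (antichain plus one edge gives a two-element chain for $X(M)$), which the paper omits. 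One caution about your stated criterion: it is not the lemma of Section 3.2, which assumes order-preservation (your collapse maps $c_{u,v}$ are generally not order-preserving) and whose proof composes in the opposite order. The equivalence ``$fM \subseteq gM$ iff $f = m \circ g$'' is correct only under the convention that the monoid product $gh$ means ``apply $g$ first'' --- the standard right-action convention for transformation monoids, and the only reading under which the representation theorems and this theorem are true. You should state that convention explicitly rather than cite Section 3.2: under the opposite reading $gM = \set{g \circ h}{h \in M}$, the pointwise criterion fails for non-order-preserving $g$, and in your shared-top case one checks that $c_{u_1,v} = c_{u_2,v} \circ h$ for a suitable regressive $h$, so the two maps would generate the same coset and the argument would collapse. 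With the convention made explicit, everything you wrote goes through.
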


This is extremely restrictive, so we strengthen the functions to get broader classes of Ramsey monoids. Even in our most extreme example, we still see that it is rare for $M$ to be a monoid. (All terms will be defined in Sections \ref{sec:defns} and \ref{sec:Comparability}.)

\begin{thm*}
	Let $M = \mathcal{F}(L)$ be the monoid of all regressive, order-preserving, $1$-level-Lipschitz functions on the finite lattice $L$. $X(M)$ is linear if and only if $L$ is a chain of length $1,2$ or $3$.
\end{thm*}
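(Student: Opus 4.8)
The plan is to reduce linearity of $X(M)$ to the concrete comparability criterion developed in Section \ref{sec:Comparability}, and then establish the two implications separately. For a monoid of this type that criterion characterizes linearity of $X(M)$ by pairwise comparability, under inclusion of left cosets, of the elements of $M = \mathcal{F}(L)$. Consequently, producing a single incomparable pair of functions refutes linearity, while checking that no such pair can exist establishes it. Everything below is phrased in terms of this criterion, so that the problem becomes a purely combinatorial one about regressive, order-preserving, $1$-level-Lipschitz functions on $L$.

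For the sufficiency direction, suppose $L$ is a chain of length $1$, $2$, or $3$. In each case $\mathcal{F}(L)$ is a very small, explicit monoid: the three constraints together leave only a handful of functions (on a chain these are exactly the maps that change $\level$ by at most one across each covering relation, which is also what keeps $\mathcal{F}(L)$ closed under composition). I would enumerate these functions, compute their cosets, and verify directly that the induced order is a chain. This is a finite check, and its only content is that on such short chains the three constraints are jointly restrictive enough to forbid any incomparable pair.

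For necessity I argue by contrapositive: if $L$ is not a chain of length at most $3$, I exhibit two elements of $\mathcal{F}(L)$ whose cosets are incomparable, in two cases. If $L$ is not a chain, choose incomparable $a,b \in L$; since $L$ is a lattice I may work inside the sublattice generated by $a \wedge b,\ a,\ b,\ a \vee b$ and use $\level$ to locate the elements needed near the join $a \vee b$. I then build two regressive, order-preserving, $1$-level-Lipschitz functions that collapse the ``$a$-side'' and the ``$b$-side'' differently, so that neither coset contains the other. If instead $L$ is a chain of length at least $4$, say $0 < 1 < 2 < 3$, I construct two functions that compress different portions of the chain, for instance one that pushes the lower part down (such as $(0,0,1,2)$) against one that pushes the upper part down (such as $(0,1,1,2)$), and check that their cosets are incomparable.

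The main obstacle is this necessity direction, and within it two points. First, every candidate witness must simultaneously satisfy all three constraints, and it is precisely the $1$-level-Lipschitz condition that makes this delicate: on chains of length at most $3$ it \emph{forces} any two functions to be comparable, which is exactly what powers the sufficiency direction, so the construction must exploit the extra room that a fourth level, or an incomparable pair, first makes available. Second, in the non-chain case the argument must be uniform across all lattice shapes, and this is where I rely on the meet/join structure together with $\level$ to guarantee the intermediate elements required to define the two competing collapsing maps. Verifying that the constructed pair is genuinely incomparable in $X(M)$, rather than merely distinct as functions, will be the crux of the computation.
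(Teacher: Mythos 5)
Your overall architecture matches the paper's: a finite check for chains of length $1$--$3$, and explicit incomparable witnesses otherwise, certified where possible by the comparability lemma of Section \ref{subsec:comparability}. But two concrete points in your necessity direction are not yet proofs. First, in the non-chain case the idea you are missing is \emph{which} incomparable pair to choose: the paper takes $a,b$ on the \emph{minimal} level of $L$ containing incomparable elements. That choice is what makes the whole construction go through, because it forces $a \wedge b$ to sit exactly one level below $a$ and $b$ and forces $\{x : x \leq a\wedge b\}$ to be a chain; only then are the two collapsing maps ($x \mapsto a$ above $a$, $x \mapsto a\wedge b$ elsewhere above the meet, identity below) simultaneously regressive, order-preserving, and $1$-level-Lipschitz. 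For a generic incomparable pair, with $a$ several levels above $a\wedge b$, your proposed collapse violates the $1$-level-Lipschitz condition at covering pairs far from the meet, and working ``near the join $a \vee b$'' does not repair this. You correctly identify the Lipschitz condition as the delicate point, but you do not supply the device that handles it.

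Second, your chain witnesses $f = (0,0,1,2)$ and $g = (0,1,1,2)$ satisfy $f(x) \leq g(x)$ for every $x$, so the lemma you announce you will use (which requires $f(x)\not\leq g(x)$ at one point and $f(y)\not\geq g(y)$ at another) says nothing about them; note also that the lemma is only a sufficient condition for non-linearity, not the characterization you describe. Your pair does in fact have incomparable cosets, but showing $f \notin gM$ requires a direct argument that exploits the $1$-Lipschitz restriction on the composing function $h$ (without that restriction $h = (0,0,2,3)$ gives $gh = f$), so ``the crux of the computation'' you defer is genuinely nontrivial and lies outside your stated framework. The paper avoids this by choosing a \emph{crossing} pair, $f = (0,1,1,1)$ and $g = (0,0,1,2)$, for which $f(3) < g(3)$ and $f(1) > g(1)$, so the lemma applies immediately; it also records the small reduction needed for chains of length greater than $4$ (collapse everything above the fourth-smallest element onto it), which your sketch omits.
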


The second way in which we make precise the notion that almost-$\mathcal{R}$ trivial monoids are almost never Ramsey is by looking at monoids defined using generators and relations. We look at two classic examples from geometry that are given by reflections: the 0-Hecke monoid associated to a Coxeter group, and the hyperplane face monoids. Even just looking at these two examples, it quickly becomes clear that these geometric monoids will only be Ramsey in trivial cases. 

Our presentation of these monoids is very gentle, as we hope to spark the interest of other experts in Ramsey theory. For the monoid experts, we hope to lay out the Ramsey implications in such a way that you can find applications in your field. In the final section we describe our best intuition of what these monoids are actually measuring in the corresponding Ramsey result.

In Section \ref{sec:defns} we describe all the relevant definitions, results and representation theorems concerning monoids. Section \ref{sec:Comparability} is spent discussing a simple combinatorial way to detect in $M$ when it will be Ramsey, using a standard representation theorem. Sections \ref{sec:two_other_monoids} and \ref{sec:geometric_monoids} concern monoids arising from geometry, not extracted from Ramsey theorems. We conclude with a section about viewing these monoids in terms of information compression, and as an example, look at how the Tetris operation in Gowers' theorem relates to information in Banach spaces.

\section*{Acknowledgements}
\addcontentsline{toc}{section}{Acknowledgements}

The authors would like to thank S. Solecki for presenting his results at the Ramsey Doccourse 2016 in Prague, which the second author attended. Solecki also motivated this project by providing many sources and questions for the authors to look at. The authors would also like to thank J. Ne\v set\v ril and J. Hubi\v{c}ka for organizing the Doccourse that made this meeting possible.

\section{
	\texorpdfstring{
		Preliminary definitions and results
	}{
		\ref{sec:defns}. Preliminary definitions and results
	}
}
\label{sec:defns}

\begin{dfn}
	A set $X$ equipped with a binary operation $\cdot$ is called a \textbf{\emph{monoid}} if $\cdot$ is associative and there is a (necessarily) unique identity element $1 \in X$. In practice we will suppress the $\cdot$ and just write $mn$ rather than $m \cdot n$ when the binary operation in question is apparent from context.
\end{dfn}

\begin{dfn}
	Given a monoid $M$ and an element $m \in M$, we let $mM := \set{mx}{x \in M}$, the left coset corresponding to $m$. Let $X(M)$ be the collection of all left cosets of $M$. We will often think of $X(M)$ as the partial order $(X(M), \sse)$. 

	Define an equivalence relation $\mathcal{R}$ on $M$ via:
	\[
		m \mathcal{R} n \quad \Leftrightarrow \quad mM = nM.
	\]
	($\mathcal{R}$ stands for ``right'', as in right multiplication.)

	Given $m \in M$, the equivalence class $[m]$ of $m$ according to the above relation is called its \textbf{\emph{$\mathcal{R}$-class}}. $M$ is called \textbf{\emph{$\mathcal{R}$-trivial}} if every $\mathcal{R}$-class is a singleton (or in other words if $[m] = \{m\}$ for all $m \in M$). $M$ is called \textbf{\emph{almost $\mathcal{R}$-trivial}} if for each $m \in M$ whose $\mathcal{R}$-class contains more than one element, we have $xm = m$ for all $x \in M$.	
\end{dfn}

The class of $\mathcal{R}$-trivial monoids is well studied (see Chapter 2 of \cite{steinberg}), whereas the class of almost $\mathcal{R}$-trivial monoids was introduced in \cite{solecki} as a natural class to which the relevant monoid Ramsey theorem applies. Almost $\mathcal{R}$-trivial monoids are very close (geometrically) to being $\mathcal{R}$-trivial monoids, in a sense we will see in Theorem \ref{thm:almost_R_trivial_rep}. Before stating it, we recall some concepts relating to partially ordered sets.

\begin{dfn}
	A \textbf{\emph{poset}} $(P, \leq_P)$ is a partially ordered set. We typically denote the order relation simply by $\leq$, and will often conflate the poset and its underlying set $P$.
	
	If $P$ is a poset, function $f: P \rightarrow P$ is \textbf{\emph{regressive}} if $f(p) \leq p$ for all $p \in P$, and is \textbf{\emph{order-preserving}} if $x \leq y$ implies $f(x) \leq f(y)$ for all $x,y \in P$.
	
	Two points $x,y \in P$ are \textbf{\emph{comparable (in $P$)}} if $x \leq y$ or $y \leq x$, and are \textbf{\emph{incomparable}} otherwise. A \textbf{\emph{linear order}} is a poset where every two elements are comparable. A \textbf{\emph{chain}} $C \subset P$ is a linearly ordered subset, and a chain is \textbf{\emph{maximal}} if it is maximal with respect to inclusion among chains. For a finite poset $P$, and $x \in P$, we define the \textbf{\emph{level of $x$ (in $P$)}}, denoted $\level(x)$, to be the maximal length of a chain in $P$ with maximal element $x$. Note that any two distinct points on the same level are necessarily incomparable.
	
	A \textbf{\emph{lattice}} $L$ is a poset in which every two elements $x,y \in L$ have a greatest element below both (called the \textbf{\emph{meet}}, denoted $a \wedge b$) and have a least element above both (called the \textbf{\emph{join}}, denoted $a \vee v$). Our lattices will all be finite, so they will all have a unique maximal element and a unique minimal element. 
\end{dfn}

We mention posets and lattices because of the following two representation theorems.

\begin{thm}[folklore]
	\label{thm:R_trivial_rep}
	Every finite $\mathcal{R}$-trivial monoid can be represented as the monoid of all regressive functions on some finite lattice. Moreover, every such collection of functions is an $\mathcal{R}$-trivial monoid, under function composition.
\end{thm}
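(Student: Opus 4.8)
I would prove the two halves separately, and I would do the converse (the ``moreover'') first, since it fixes the conventions used throughout. Write $\F(P)$ for the set of all regressive self-maps of a finite poset $P$, and take the monoid product to be composition in diagrammatic order, $f \ast g = g \circ f$ (equivalently, maps act on the right). This set is closed under $\ast$ and contains the identity: the identity map is regressive, and if $f,g$ are regressive then $g(f(p)) \leq f(p) \leq p$, so $g \circ f$ is regressive. To see that $\F(P)$ is $\mathcal{R}$-trivial, order functions pointwise, declaring $f \preceq g$ when $f(p) \leq g(p)$ for all $p \in P$. Every element of the right coset $f\,\F(P) = \set{g \circ f}{g \in \F(P)}$ satisfies $(g \circ f)(p) = g(f(p)) \leq f(p)$, hence lies $\preceq f$. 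Thus if $f\,\mathcal{R}\,g$, so that $g \in f\,\F(P)$ and $f \in g\,\F(P)$, we get both $g \preceq f$ and $f \preceq g$, and therefore $f = g$. This argument never uses that $P$ is a lattice, nor that the functions are order-preserving; it is, however, sensitive to the composition convention, and with the opposite (ordinary) convention the statement already fails on the four-element diamond.

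For the forward direction, let $M$ be a finite $\mathcal{R}$-trivial monoid and turn its underlying set into a poset $P = (M, \leq)$ by $m \leq n \iff mM \sse nM$. Reflexivity and transitivity are clear, antisymmetry is exactly $\mathcal{R}$-triviality ($mM = nM \Rightarrow m = n$), and $1$ is the maximum since $mM \sse M = 1M$. For $a \in M$ define $\rho_a \colon P \to P$ by $\rho_a(m) = ma$. Each $\rho_a$ is regressive because $maM = m(aM) \sse mM$, i.e.\ $\rho_a(m) \leq m$. Under the convention above, $a \mapsto \rho_a$ is a monoid homomorphism, since $(\rho_a \ast \rho_b)(m) = \rho_b(\rho_a(m)) = \rho_b(ma) = mab = \rho_{ab}(m)$ and $\rho_1 = \mathrm{id}$, and it is injective because $\rho_a(1) = a$. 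This already represents $M$ faithfully by regressive functions on the poset $P$.

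It remains to replace the poset $P$ by a lattice, and I expect this to be the only genuinely delicate step. The plan is to embed $P$ as a sub-poset of a finite lattice $L$ via an order-embedding $\iota \colon P \to L$ (for instance $L$ can be taken to be the lattice of down-sets of $P$ ordered by inclusion, with $\iota(p)$ the principal down-set of $p$, or the Dedekind--MacNeille completion), and then to extend each $\rho_a$ to $\tilde\rho_a \colon L \to L$ by setting $\tilde\rho_a(\iota(p)) = \iota(\rho_a(p))$ and $\tilde\rho_a(x) = x$ for $x \in L \setminus \iota(P)$. Because regressive functions carry no monotonicity constraint, this free extension is legitimate: $\tilde\rho_a$ is regressive ($\iota(\rho_a(p)) \leq \iota(p)$ since $\iota$ is an order-embedding, and $x \leq x$ on the new points), $\tilde\rho_1 = \mathrm{id}_L$, and $a \mapsto \tilde\rho_a$ is again an injective homomorphism, since any composite either stays inside $\iota(P)$ (where it tracks the corresponding product in $M$) or is fixed on $L \setminus \iota(P)$. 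The point to verify carefully is precisely this compatibility of the extension with composition; and it is exactly the freedom gained by dropping order-preservation that makes a lattice target possible here, in contrast with the order-preserving (that is, $\mathcal{J}$-trivial) variant, where one is forced to stop at a poset.
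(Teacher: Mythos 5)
Your proof is correct. Note that the paper itself gives no proof of this theorem --- it is cited as folklore, and even the almost-$\mathcal{R}$-trivial analogue (Theorem \ref{thm:almost_R_trivial_rep}) is stated with its proof omitted --- so there is nothing in the text to compare against; what you have written is the standard folklore argument, carried out in full. Three remarks. First, your insistence on the diagrammatic convention $f \ast g = g \circ f$ is not pedantry: the ``moreover'' half is genuinely false for non-order-preserving regressive maps under ordinary composition (your diamond example checks out --- the two maps fixing $a$, $b$, $0$ and sending the top to $a$ and to $b$ respectively generate each other's cosets), and the paper is in fact internally inconsistent on this point, computing $gh$ as $g \circ h$ in the lemma of Section \ref{subsec:comparability} but as $h \circ g$ in the proof of Theorem \ref{thm:all_regressive_XM_not_linear_R_trivial}. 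Second, you tacitly (and correctly) read ``represented as the monoid of all regressive functions'' as ``embedded into the monoid of all regressive functions'': the literal reading is impossible on cardinality grounds (no finite lattice $L$ has $|\F(L)| = 3$, yet there are three-element $\mathcal{R}$-trivial monoids), and the embedding reading is what combines with the ``moreover'' half to give the intended characterization. Third, the lattice-completion step is the right fix for the fact that the divisibility order $mM \sse nM$ on $M$ need not be a lattice, and your verification that the free extension (identity off $\iota(P)$) remains a monoid embedding is complete as written; as you observe, it is exactly the absence of a monotonicity requirement that makes this extension harmless, which is consistent with the authors' own marginal note that the order-preserving ($\mathcal{J}$-trivial) variant lands only in a poset.
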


The corresponding representation for almost $\mathcal{R}$-trivial monoids adds in constant functions that take on minimal values.

\begin{thm}
	\label{thm:almost_R_trivial_rep}
	Every finite almost $\mathcal{R}$-trivial monoid can be represented as the monoid of all regressive functions on some finite poset, together with constant functions that achieve a minimal values on that poset, under function composition.
\end{thm}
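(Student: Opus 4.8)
The plan is to realize $M$ by right multiplication, mirroring the proof of Theorem \ref{thm:R_trivial_rep} and treating the non-trivial $\mathcal{R}$-classes as an extra antichain of minimal points. I would first pin down those classes. Writing $m \le_{\mathcal{R}} n$ for $mM \sse nM$, this is a preorder with maximum $1$, and it is a genuine partial order away from the enlarged classes. By hypothesis every element of an $\mathcal{R}$-class of size $>1$ is a right zero, and any two right zeros $m,n$ satisfy $mM = nM$ (since $n = mn \in mM$ and $m = nm \in nM$, as each absorbs on the left). Hence there is at most one non-singleton $\mathcal{R}$-class, it is exactly the set $R_0$ of right zeros of $M$, and for $m \in R_0$ the coset $mM$ is the $\sse$-minimum of $X(M)$. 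Thus $M$ looks like an $\mathcal{R}$-trivial monoid whose unique bottom class has been fattened into an antichain $R_0$ of mutually $\mathcal{R}$-equivalent right zeros.

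I would then let $P$ be the underlying set $M$ equipped with $\le_{\mathcal{R}}$, but with the elements of $R_0$ declared pairwise incomparable; this breaks the only ties, so $P$ is a finite poset whose minimal elements are precisely $R_0$. Define $\rho_m \colon P \to P$ by $\rho_m(x) = xm$. Since $xmM \sse xM$ we get $\rho_m(x) \le_{\mathcal{R}} x$, a direct computation (composing so that $\rho_m$ acts first) gives $\rho_m \cdot \rho_n = \rho_{mn}$, and $\rho_m(1) = m$ shows $m \mapsto \rho_m$ is injective. For $m \in R_0$ the right-zero law forces $\rho_m(x) = xm = m$ for all $x$, so $\rho_m$ is exactly the constant function at the minimal point $m$; these account for the minimal constants. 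For $m \notin R_0$ I would argue that $\rho_m$ is a genuine regressive function, so that $m \mapsto \rho_m$ carries $M$ onto a monoid of regressive functions on $P$ together with the constant functions at its minimal points. That this matches the full statement can be checked against Theorem \ref{thm:R_trivial_rep} applied to the $\mathcal{R}$-trivial quotient obtained by collapsing $R_0$ to a single zero.

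The main obstacle is verifying that $\rho_m$ is regressive for every $m$, and specifically on the bottom antichain. Regressivity at a non-minimal $x$ is immediate from $xm \le_{\mathcal{R}} x$; but if $x \in R_0$ is minimal, regressivity forces $\rho_m(x) = x$, that is $xm = x$. This holds automatically when $m \in R_0$, but for $m \notin R_0$ it asserts that right multiplication fixes the bottom class pointwise, and this does \emph{not} follow from $\mathcal{R}$-triviality alone. It is exactly here that the full strength of the almost $\mathcal{R}$-trivial hypothesis must be brought to bear — equivalently, one must exclude an element that permutes the right zeros nontrivially. Establishing this compatibility is the crux of the argument; once it is in hand, the image of $m \mapsto \rho_m$ lands among the regressive functions on $P$ and the minimal constants, and the converse direction — that any such monoid of functions is almost $\mathcal{R}$-trivial, its minimal constants forming the only non-singleton $\mathcal{R}$-class — is a routine verification.
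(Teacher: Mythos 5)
The paper gives no proof of this theorem (it is explicitly omitted as ``very similar to the folklore result''), so your argument has to stand on its own. Your setup is the natural one---the right regular representation $\rho_m(x)=xm$ on the $\mathcal{R}$-preorder, with the set $R_0$ of right zeros flattened into an antichain of minimal points---and your preliminary analysis of $R_0$ (closed under right multiplication, a single $\mathcal{R}$-class, minimum coset) is correct. You also correctly isolate the one step that matters. But that step is not a gap waiting to be filled: the statement you need, namely $xm=x$ for every $x\in R_0$ and every $m\notin R_0$, is \emph{false} for some almost $\mathcal{R}$-trivial monoids. Let $M=\{1,m,a,b\}$ with $1$ the identity, $xa=a$ and $xb=b$ for all $x$ (so $a,b$ are right zeros), and $m^2=m$, $ma=a$, $mb=b$, $am=b$, $bm=b$. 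One checks directly that the four left-translation maps $L_x(y)=xy$ are closed under composition and realize exactly this table, so $M$ is a genuine monoid. Its cosets are $1M=M$, $mM=\{m,a,b\}$, $aM=bM=\{a,b\}$, so the unique non-singleton $\mathcal{R}$-class is $\{a,b\}$ and both of its members are right zeros: $M$ is almost $\mathcal{R}$-trivial. Yet $am=b\neq a$, so your $\rho_m$ sends the minimal point $a$ to the incomparable minimal point $b$; it is neither regressive on your $P$ nor a minimal constant.

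Moreover, the failure is not an artifact of your particular choice of poset. Work with the composition convention under which $uv$ means ``apply $u$, then $v$''---the convention the paper needs elsewhere, since it is the one under which the minimal constants form a single non-singleton $\mathcal{R}$-class of right zeros (compare the lemma in Section \ref{subsec:almost_r_trivial}). Suppose $\phi$ were any injective homomorphism of the $M$ above into the regressive functions on a finite poset $Q$ together with constants at minimal points, and write $f=\phi(m)$, $g=\phi(a)$, $h=\phi(b)$. The relations $am=b$ and $ba=a$ become $h=f\circ g$ and $g\circ h=g$. If $f$ and $g$ are both regressive, then for every $t$ we get $g(t)=g(h(t))=g(f(g(t)))\le f(g(t))\le g(t)$, forcing $h=f\circ g=g$ and contradicting $a\ne b$; if $f$ is a constant then $h=f$, contradicting $m\ne b$; if $g$ is a constant at a minimal point and $f$ is regressive then again $h=g$. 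So this $M$ admits no faithful representation of the required form on \emph{any} poset. The conclusion to draw is that the theorem needs an additional hypothesis---precisely the compatibility you identified, that right multiplication fix each right zero---and under that hypothesis your construction does go through verbatim. As written, however, the ``crux'' you deferred is not merely unproved but false, and it is where both your argument and, apparently, the statement itself collapse.
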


The proof is omitted as it is very similar to the folklore result. From these theorems we see how close almost $\mathcal{R}$-trivial monoids are to $\mathcal{R}$-trivial monoids; they differ only by the inclusion of some (minimal) constant functions. These constant functions are the only functions with the property that makes almost $\mathcal{R}$-trivial monoids special.

In Section \ref{sec:Comparability} we examine the structure of $X(M)$ when $M$ is presented in one of these representations. 

The major result of \cite{solecki} produces a Ramsey statement for every almost $\mathcal{R}$-trivial monoid $M$. Appropriately, $M$ is called \textbf{\emph{Ramsey}} if this Ramsey statement about $M$ is true. Solecki then proves that $M$ is Ramsey if and only if $X(M)$ is linear. If it is not linear, a weaker Ramsey statement can still be recovered. 

Here, we will mostly be concerned with checking when $X(M)$ is linear using the representation theorems above; we will not actually use the Ramsey theorems, though we state the main one here for the sake of completeness, after several preliminary definitions.

\begin{dfn}
	Let $M$ be a finite monoid. A \textbf{\emph{pointed $M$-set}} is a set $X$ equipped with an action of $M$ and a distinguished point $x$ such that $Mx = X$. Given a sequence $(X_n)_{n \in \N}$ of pointed $M$-sets, which we treat as mutually disjoint, let $\langle (X_n) \rangle$ be the set of all finite partial functions $f : \N \to \bigcup_{n \in \N} X_n$ such that $f(n) \in X_n$ for all $n \in \dom(f)$. Given $f, g \in \langle (X_n) \rangle$, we say $f \prec g$ if $\max(\dom(f)) < \min(\dom(g))$. Note that if $f \prec g$, then $f \cup g$ is a well-defined element of $\langle (X_n) \rangle$, making $\langle (X_n) \rangle$ a partial semigroup. Also note that $M$ acts on $\langle (X_n) \rangle$ in the natural coordinatewise manner. 

	We say that $(X_n)_{n \in \N}$ has the \textbf{\emph{Ramsey property}} if for every finite colouring of $\langle (X_n) \rangle$, there is a sequence $B = \{f_i\}_{i \in \N} \sse \langle (X_n) \rangle$, such that
	\begin{itemize}
		\item $f_i \prec f_{i+1}$ for all $i \in \N$;
		\item for each $i \in \N$, the range of $f_i$ includes the distinguished element of $X_n$ for some $n \in \dom(f)$; and
		\item the ``subspace'' $[B]$ generated by $B$ is monochromatic, where
		\[
			[B] := \set{ m_0(f_{i_0}) \cdots m_k(f_{i_k})}{k \in \N, \, m_i \in M \text{ for all } i \text{, and } m_i = 1_M \text{ for at least one } i}.
		\]
	\end{itemize}

	$M$ is called \textbf{\emph{Ramsey}} if every sequence of pointed $M$-sets has the Ramsey property.
\end{dfn}

Finally, the main theorem:

\begin{thm}[Solecki 2017, \cite{solecki}]
	If $M$ is almost $\mathcal{R}$-trivial, then $M$ is Ramsey if and only if $X(M)$ is linear. 
\end{thm}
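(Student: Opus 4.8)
The plan is to prove the two implications separately. The forward direction (\emph{$M$ Ramsey implies $X(M)$ linear}) I would prove by contraposition with an explicit bad colouring, while the reverse direction (\emph{$X(M)$ linear implies $M$ Ramsey}) is the substantive Ramsey theorem and is where essentially all the work lies.

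For the forward direction, suppose $X(M)$ is not linear, witnessed by elements $a, b \in M$ with $aM$ and $bM$ incomparable under $\sse$. Take every $X_n$ to be a copy of the regular pointed $M$-set, i.e.\ $X_n = M$ with distinguished point $1_M$ and $M$ acting by left multiplication (so $M \cdot 1_M = M = X_n$). I would then define a finite colouring $c$ of $\langle (X_n) \rangle$ that reads off, from the value of $f$ at $\min(\dom f)$, into which of $aM$, $bM$, or ``neither'' that value falls, refining as needed so that the colouring interacts predictably with the simplifications any subspace must respect. The point is that incomparability of $aM$ and $bM$ means no single element of $M$ simplifies into both cosets, so within a candidate monochromatic $[B]$ one can exhibit two generators---one built using $a$ on an otherwise-$1_M$ factor and one built using $b$---that the colouring is forced to separate. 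Verifying that $c$ takes finitely many values and that both generators genuinely lie in $[B]$ (this is where the clause ``$m_i = 1_M$ for at least one $i$'' is used) yields the contradiction; this direction is combinatorial and comparatively short.

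For the reverse direction I would run the standard idempotent-ultrafilter argument on the partial semigroup $S := \langle (X_n) \rangle$, in the style of the Galvin--Glazer proof of Hindman's theorem and Gowers' $\mathrm{FIN}_k$ theorem. Passing to the space $\gamma S$ of cofinite ultrafilters on $S$ makes $\gamma S$ a compact right-topological semigroup on which each $m \in M$ acts continuously via the extension of $x \mapsto m(x)$, and the Ellis--Numakura lemma supplies idempotents. The heart of the argument is to build, by recursion, a $\prec$-increasing sequence $(f_i)$ together with a coherent family of idempotent ultrafilters indexed by the cosets in $X(M)$, so chosen that for each $i$ and each $m \in M$ the colour of $m(f_i)$, and of the admissible concatenations, is pinned down in advance. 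Here linearity of $X(M)$ is exactly the hypothesis that lets the family of idempotents be arranged into a single chain compatible with the $M$-action: when $aM \sse bM$ the corresponding idempotents are comparable in the semigroup ordering, and with all cosets comparable one obtains one consistent colour for every generator appearing in $[B]$, rather than a branching set of incompatible colour demands.

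The main obstacle is precisely this coherent construction in the reverse direction: one must show the idempotents can be chosen to respect the $M$-action and the order on $X(M)$ simultaneously, and that the almost-$\mathcal{R}$-trivial hypothesis makes the algebra go through. By Theorem~\ref{thm:almost_R_trivial_rep} the non-singleton $\mathcal{R}$-classes correspond to the adjoined minimal constant functions, whose absorbing behaviour ($xm = m$ for all $x$) is what guarantees the associated ultrafilters sit at the bottom of the ordering and do not obstruct the recursion. Isolating the exact mechanism by which linearity converts ``comparable cosets'' into ``compatible idempotents,'' and checking that a single recursive step can stabilise the colour over all $m \in M$ at once, is the delicate technical core; everything else reduces to bookkeeping on the partial-semigroup structure of $S$.
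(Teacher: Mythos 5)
The paper does not prove this theorem at all: it is imported verbatim from Solecki's paper \cite{solecki} and used as a black box (the authors only ever verify the linearity of $X(M)$, never the Ramsey statement itself). So there is no in-paper argument to compare yours against, and your proposal has to be judged as a free-standing reconstruction of Solecki's proof. Judged that way, it is an architecturally reasonable outline --- contrapositive-with-a-bad-colouring for ``Ramsey $\Rightarrow$ linear'', idempotent ultrafilters on the partial semigroup $\langle (X_n) \rangle$ for the converse --- but it is a plan, not a proof. Both halves stop exactly where the work begins.

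Concretely: in the forward direction you never actually define the colouring. Writing ``refining as needed so that the colouring interacts predictably with the simplifications'' defers the only nontrivial point, namely that $aM$ and $bM$ may intersect, so colouring by ``which coset does $f(\min\dom f)$ land in'' does not by itself separate $a(f_{i_0})\cup f_{i_1}$ from $b(f_{i_0})\cup f_{i_1}$ when $a\cdot v \in aM\cap bM$. You also need to be careful that the witnesses you exhibit really lie in $[B]$: a bare $a(f_{i_0})$ does not, because of the ``$m_i = 1_M$ for at least one $i$'' clause, so the incomparability has to be detected on genuine concatenations; you gesture at this but do not resolve it. In the reverse direction, the entire content of the theorem is the ``coherent family of idempotents indexed by $X(M)$, compatible with the $M$-action and with $\sse$,'' and you explicitly flag this as ``the delicate technical core'' without supplying it. That construction (in Solecki's paper it is the compact right-topological semigroup machinery together with the ordered tower of idempotents $y_{uM} \le y_{vM}$ for $uM \sse vM$, and the verification that $u\cdot y_{uM}$ behaves correctly) is the theorem; a proof that names it as the missing step has not proved anything. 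The remark about the almost-$\mathcal{R}$-trivial hypothesis entering through the absorbing constant functions of Theorem \ref{thm:almost_R_trivial_rep} is a plausible heuristic, but as written it is an assertion, not an argument. In short: right skeleton, no flesh, and the gaps sit precisely at the two points where the theorem is hard.
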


\section{
	\texorpdfstring{
		Comparability in $X(M)$, when $M$ is represented by functions
	}{
		\ref{sec:Comparability}. Comparability in X(M), when M is represented by functions
	}
}
\label{sec:Comparability}

The function representations of monoids given in Theorems \ref{thm:R_trivial_rep} and \ref{thm:almost_R_trivial_rep} allow us to present a useful characterization of when $X(M)$ is linear. We first present an example that motivated \cite{lupini}, \cite{solecki}, and us.

\subsection{
	\texorpdfstring{
		Catalan monoid
	}{
		\ref{subsec:Catalan}. Catalan Monoid
	}
}
\label{subsec:Catalan}

By convention, we denote $[n] = \{0,1,2, \ldots, n-1\}$.

\begin{dfn}
	Given $n \in \N$, the \textbf{\emph{Catalan monoid}} $C_n$ is the monoid of all of all order-preserving, regressive functions from $[n]$ to itself. The class $I_n \sse C_n$ is the class of all such functions that are additionally $1$-Lipschitz, meaning $i-1 \leq f(i) \leq i$ for all $i \in [n]$.
\end{dfn}

These are called ``Catalan monoids'' because of the well-known result that $|C_n| = \frac{1}{n+1} \binom{2n}{n}$, the $n^\text{th}$ Catalan number. See \cite{DHST} and \cite{MS}, for more historical discussion. In \cite{lupini} the elements of $I_n$ are thought of as \textbf{\emph{generalized Tetris functions}}, in the sense that they generalise the usual Tetris function $T: [n] \to [n]$ given by
\[
	T(k) = 
	\begin{cases}
		k-1 & k > 0 \\
		0 & k = 0
	\end{cases}
\]
In \cite{solecki} it was observed that $X(I_n)$ is linear if and only if $n < 4$. 

We examine $M = I_4$ in detail. 
Let $f, g \in M$ be the following two functions:
\[
   \begin{tikzcd}[row sep=0.5em]
      & f & \\
      3
         \arrow[ddrr, red, mapsto] & &
      3 \\
      2
         \arrow[drr, mapsto] & &
      2 \\
      1
         \arrow[rr, cyan, mapsto] & &
      1 \\
      0
         \arrow[rr, mapsto] & &
      0 \\
   \end{tikzcd}
   \quad \quad \quad
   \begin{tikzcd}[row sep=0.5em]
      & g & \\
      3
         \arrow[drr, red, mapsto] & &
      3 \\
      2
         \arrow[drr, mapsto] & &
      2 \\
      1
         \arrow[drr, cyan, mapsto] & &
      1 \\
      0
         \arrow[rr, mapsto] & &
      0 \\
   \end{tikzcd}
\]

Notice that $f(3) = 1 < 2 = g(3)$ (the red arrows) and $f(1) = 1 > 0 = g(1)$ (the blue arrows). As a consequence of the lemma in the next section, these imply that $f M \not\supseteq g M$ and  $f M \not\sse g M$, and therefore that $X(I_4)$ is not linear.

\subsection{
	\texorpdfstring{
		Function representation and comparability
	}{
		\ref{subsec:comparability}. Function representation and comparability
	}
}
\label{subsec:comparability}

Now we will look at this representation in general, and state a necessary condition for comparability in $X(M)$ in this context.

\begin{lem}
	Let $M$ be a monoid consisting of regressive, order-preserving functions on the finite poset $P$. Then $X(M)$ is not linear if there are $f,g \in M$, and $x,y \in P$ such that $f(x) \not \leq g(x)$ and $f(y) \not \geq g(y)$.
\end{lem}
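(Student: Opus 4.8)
The plan is to reduce the non-linearity of $X(M)$ to a pointwise comparison of the two functions $f$ and $g$. Since $X(M)$ fails to be linear precisely when it contains two incomparable elements, it suffices to produce such a pair, and the natural candidates are the cosets $fM$ and $gM$ themselves. Concretely, I aim to show that the hypotheses force $fM \not\subseteq gM$ and $gM \not\subseteq fM$ simultaneously.

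The first step is the standard observation that $fM \subseteq gM$ holds if and only if $f \in gM$, i.e.\ if and only if $f = gh$ for some $h \in M$. One direction is immediate, as $f = f\cdot 1 \in fM$; for the other, $f = gh$ yields $fM = ghM \subseteq gM$. The crux of the argument is then to translate the algebraic equation $f = gh$ into a pointwise inequality between the functions $f$ and $g$.

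The key lemma I would establish is: if $f = gh$ in $M$, then $f(x) \leq g(x)$ for every $x \in P$. This is exactly where regressivity and order-preservation do the work. Writing the product out as a composition and using that the inner factor is regressive (so its argument only decreases) together with order-preservation of the outer factor, one gets $f(x) \leq g(x)$ for all $x$. I would note that the same conclusion is reached regardless of which composition convention one adopts for the product in $M$, so no convention ambiguity survives into the statement.

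Taking the contrapositive, the hypothesis $f(x) \not\leq g(x)$ at a single point rules out the existence of any $h$ with $f = gh$, hence $fM \not\subseteq gM$. Running the identical argument with the roles of $f$ and $g$ interchanged, the hypothesis $f(y) \not\geq g(y)$ (equivalently $g(y) \not\leq f(y)$) rules out $g = fh'$ and gives $gM \not\subseteq fM$. Together these show $fM$ and $gM$ are incomparable, so $X(M)$ is not linear. I do not expect a genuine obstacle here; the only point requiring care is stating the pointwise inequality lemma correctly relative to the chosen composition convention, which as noted is harmless.
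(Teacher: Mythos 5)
Your proposal is correct and follows essentially the same route as the paper: show $f = f\cdot 1 \notin gM$ by noting that for any $h$, regressivity of the inner factor plus order-preservation of the outer factor forces $(gh)(x) \leq g(x)$, contradicting $f(x) \not\leq g(x)$, and then argue symmetrically at $y$. Your added observation that the pointwise inequality $(gh)(x) \leq g(x)$ holds under either composition convention is accurate and harmless.
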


\begin{proof}
	Assume $f,g,x$ and $y$ are witnesses. We will show that $f = f \cdot \text{id} \notin g M$. Let $h \in M$. Since $h$ is regressive we must have $h(x) \leq x$. Since $g$ is order preserving we must have $g(h(x)) \leq g(x)$. By assumption $f(x) \not \leq g(x)$, so $f \neq gh$. The other direction is analogous.
\end{proof}

If $P$ is not linear, it makes it somewhat unlikely that $X(\mathcal{F}(P))$ will be linear (this will be made precise below). Of course $X(M)$ can be non-linear even when $P$ is linear, as in the case of $I_4$.

\subsection{
	\texorpdfstring{
		Function representation using all regressive functions
	}{
		\ref{subsec:using_all_functions}. Function representation using all regressive functions
	}
}
\label{subsec:using_all_functions}

We now make more precise our heuristic that in this context $X(M)$ tends to be non-linear.

\begin{thm}
	\label{thm:all_regressive_XM_not_linear_R_trivial}
	Let $M = \mathcal{F}(P)$ be the monoid of all regressive functions on the finite poset $P$. Then $X(M)$ is not linear if and only if $P$ contains two distinct, but not necessarily disjoint, ``edges'' $E_1 = \{x_1 < x_2\}$ and $E_2 = \{y_1 < y_2\}$.

	Equivalently, $X(M)$ is linear if and only if $P$ is a collection of mutually incomparable points together with at most one chain of length $2$.
\end{thm}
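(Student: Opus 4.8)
The plan is to first replace the comparability Lemma of the previous subsection (which assumes order-preservation, and so does not apply to all of $\F(P)$) with a two-sided criterion valid for arbitrary regressive functions. Since $fM \sse gM$ if and only if $f \in gM$, i.e.\ $f = g \circ h$ for some regressive $h$, and since $h(p) \leq p$ forces $f(p) = g(h(p)) \in g(\set{q}{q \leq p})$, I would first prove the exact characterization
\[
	fM \sse gM \quad \Longleftrightarrow \quad f(p) \in g(\set{q}{q \leq p}) \text{ for every } p \in P.
\]
The reverse implication just assigns to each $p$ a witness $q_p \leq p$ with $g(q_p) = f(p)$ and sets $h(p) = q_p$. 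Linearity of $X(M)$ then becomes the statement that for all $f,g \in \F(P)$ one of the two displayed inclusions holds.

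For the easy direction I would argue the contrapositive. If $P$ is an antichain together with at most one edge $\{x_1 < x_2\}$, then every element other than $x_2$ is minimal and hence fixed by every regressive function, while $x_2$ may only map to $x_1$ or to itself. Thus $\F(P)$ consists of just the identity and (possibly) the single map collapsing $x_2$ to $x_1$; having at most two elements, $X(M)$ is automatically a chain.

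The substance is the forward direction. Given two distinct edges $E_1 = \{x_1 < x_2\}$ and $E_2 = \{y_1 < y_2\}$, the natural witnesses are the \emph{edge-collapse} maps $f$, sending $x_2 \mapsto x_1$ and fixing everything else, and $g$, sending $y_2 \mapsto y_1$ and fixing everything else; both are regressive. When the two edges have distinct tops, i.e.\ $x_2 \neq y_2$, these are incomparable: at $p = y_2$ we have $f(y_2) = y_2$, yet no $q \leq y_2$ satisfies $g(q) = y_2$ (since $g$ moves only $y_2$, and strictly downward), so $fM \not\sse gM$; the symmetric computation at $p = x_2$ gives $gM \not\sse fM$. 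Hence $X(M)$ is non-linear whenever $P$ has two edges with distinct top elements.

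The main obstacle is precisely the remaining case, in which every pair of edges shares its top element ($x_2 = y_2$) --- that is, $P$ is a ``fan'' of mutually incomparable points below a common maximum, together with isolated points. Here the edge-collapse maps become comparable (indeed $\mathcal{R}$-equivalent), and the criterion shows every regressive map is determined by its value at the single non-minimal point, so $X(M)$ collapses to a two-element chain and the argument genuinely breaks. This is the delicate heart of the statement, and it is where the distinction between posets and lattices bites: a fan is never a lattice (its incomparable bottoms have no meet), and under the lattice hypothesis of Theorem \ref{thm:R_trivial_rep} any lattice with a repeated top automatically contains a second edge with a different top, returning us to the tractable case. I would therefore expect the clean argument to require either that the two edges be chosen with distinct tops, or that $P$ be a lattice, and I would isolate the fan configuration as the key case to treat (or to exclude) with care.
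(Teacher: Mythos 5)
Your proposal stops short of a proof, and the obstacle it runs into — the ``fan'' of incomparable points under a common top — is an artifact of reading the coset $gM$ with the wrong composition order, not a genuine gap in the theorem. You take $gM = \{g \circ h : h \in M\}$ (apply $h$ first, then $g$), which yields your criterion $fM \subseteq gM \iff f(p) \in g(\{q : q \leq p\})$ for all $p$. But in this paper $gM$ has to be read as the set of maps factoring as a regressive map applied \emph{after} $g$, i.e.\ $\{h \circ g : h \in M\}$: this is the convention under which the monoid of all regressive maps is $\mathcal{R}$-trivial, as asserted in Theorem \ref{thm:R_trivial_rep} (under your convention the diamond lattice already violates that, since the two maps sending the top to the two incomparable middle elements become $\mathcal{R}$-equivalent), and it is the convention the paper's own proof of this theorem uses when it computes $h(g(c))$. (The lemma of the previous subsection computes $g(h(x))$, which may be what misled you; that lemma happens to hold under either convention.) The correct comparability criterion is: $f \in gM$ if and only if $f$ is constant on the fibres of $g$ and $f(p) \leq g(p)$ for every $p$.

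With the convention fixed, the fan is not a delicate case but another instance of your own two-line argument. If $x_1, y_1$ are incomparable with common upper bound $t$, and $f, g$ are the edge collapses $t \mapsto x_1$ and $t \mapsto y_1$, then every element $h \circ g$ of $gM$ sends $t$ to $h(y_1) \leq y_1$, while $f(t) = x_1 \not\leq y_1$; so $f \notin gM$, and symmetrically $g \notin fM$, whence $X(M)$ is not linear. (In the three-element fan one gets $fM = \{f\}$ and $gM = \{g\}$, two incomparable singleton cosets — not the two-element chain you predicted; that chain is exactly what the opposite convention produces, under which the theorem as stated would indeed be false.) Your easy direction and your treatment of edges with distinct tops are correct and convention-independent, and your case split mirrors the paper's (which runs through a chain of length $3$, disjoint edges, edges sharing a top, and edges sharing a bottom). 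But as written the proposal ends by conjecturing that the statement needs a distinct-tops or lattice hypothesis, when the shared-top case is true and is dispatched by the very witnesses you construct, once $gM$ is read correctly.
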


\begin{proof}
	Suppose first that $P$ contains a chain $\{a < b < c\}$ of length $3$.

	We define two elements $f, g \in M$ as follows. Let $f$ fix all elements of $P$ except that $f(b) = a$. Similarly, let $g$ fix all element of $P$ except that $g(c) = b$. These are clearly regressive functions. We will show that $f = f \cdot \text{id} \notin g M$. Let $h \in M$. Note that $f(c) = c$, but $g(c) = b < c$. Since $h$ is regressive, it is impossible for $h(g(c)) = c$. So $f \neq gh$. A similar argument shows that $g \notin fM$. So $fM$ and $gM$ are incomparable, and therefore $X(M)$ is not linear.

	Now suppose that $P$ has no chains of length greater than $2$. Slight modifications to the above argument will show that $X(M)$ will not be linear if $P$ contains two disjoint edges, or when it contains two edges that only agree at the top point, or when it contains two edges that only agree on the lower point. Note however that the functions created will not necessarily be order-preserving. 

	The only remaining option is that $P$ is a collection of mutually incomparable points together with at most ones chain of length $2$.
\end{proof}

\subsection{
	\texorpdfstring{
		Function representation using all OP functions
	}{
		\ref{subsec:using_all_op_functions}. Function representation using all OP functions
	}
}
\label{subsec:using_all_op_functions}

Since the $\mathcal{R}$-trivial case was so easily resolved, we now look at a subclass of the $\mathcal{J}$-trivial case (which looks at all regressive, order-preserving maps on a lattice).

\begin{thm}
	\label{thm:all_regressive_XM_not_linear}
	Let $M = \mathcal{F}(P)$ a monoid of regressive, order-preserving functions on the finite lattice $P$.

	$X(M)$ is not linear if and only if $P$ contains two incomparable elements, or $P$ has a maximal chain of length at least $3$ .

	Equivalently, $X(M)$ is linear if and only if $P$ is a chain of length $1$ or $2$.
\end{thm}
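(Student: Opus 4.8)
The plan is to use the Lemma of Section~\ref{subsec:comparability} as the engine for non-linearity: to show $X(M)$ is not linear it suffices to produce $f,g\in M$ and points $x,y\in P$ with $f(x)\not\leq g(x)$ and $f(y)\not\geq g(y)$, i.e.\ two regressive order-preserving functions that are incomparable in the pointwise order. The key observation that makes the lattice hypothesis usable is that for every $t\in P$ the ``meet function'' $\mu_t(x):=x\wedge t$ is regressive ($x\wedge t\leq x$) and order-preserving ($x\leq y$ implies $x\wedge t\leq y\wedge t$), hence $\mu_t\in M$. I would first prove the forward implication in the form: if $P$ has two incomparable elements or a three-element chain, then $X(M)$ is not linear (this is the contrapositive of ``linear $\Rightarrow$ chain of length $1$ or $2$'', since having a maximal chain of length $\geq 3$ is the same as containing a three-element chain).

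For two incomparable elements $p,q$, take $f=\mu_p$ and $g=\mu_q$. Since $p\wedge q<p$ and $p\wedge q<q$, evaluating at $x=p$ gives $f(p)=p\not\leq p\wedge q=g(p)$, and at $y=q$ gives $f(q)=p\wedge q\not\geq q=g(q)$, so the Lemma applies. For a chain $a<b<c$, I would pair $g=\mu_b$ (so $g(b)=b$ and $g(c)=b$) with a function $f$ satisfying $f(b)=a$ and $f(c)=c$. The naive ``single swap'' sending $b\mapsto a$ and fixing everything else fails to be order-preserving in a general lattice, and this is the one genuinely technical point of the proof. I would instead define $f(x)=x$ when $x\geq c$ and $f(x)=x\wedge a$ otherwise; this is regressive by construction, and I would check order-preservation by cases on a pair $u\leq v$ (the only subtle case, $u\not\geq c$ and $v\geq c$, reduces to $u\wedge a\leq u\leq v$). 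Then $f(c)=c\not\leq b=g(c)$ and $f(b)=a\not\geq b=g(b)$, so the Lemma again gives non-linearity.

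For the reverse implication I would verify directly that $X(M)$ is linear when $P$ is a chain of length $1$ or $2$: if $P=\{p\}$ then $M=\{\mathrm{id}\}$ and $X(M)$ is a single point, while if $P=\{0<1\}$ then $M=\{\mathrm{id},\mu_0\}$ with $\mathrm{id}\,M=M$ and $\mu_0 M=\{\mu_0\}\subsetneq M$, a two-element chain. Finally, the ``Equivalently'' reformulation is a purely order-theoretic translation: having no two incomparable elements means $P$ is a chain, and a chain with no maximal chain of length $\geq 3$ has at most two elements, so the absence of both obstructions is exactly the statement that $P$ is a chain of length $1$ or $2$. The main obstacle, as noted, is the construction and order-preservation check for $f$ in the three-element-chain case; everything else is routine given the Lemma and the availability of meet functions.
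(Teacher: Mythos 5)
Your proof is correct, but it takes a genuinely different route from the paper's for the main (incomparable) case. The paper's argument picks two distinct maximal chains $C_1, C_2$, invokes Lemma \ref{lem:level_lemma} to find $y_1 \in C_1$, $y_2 \in C_2$ on the same level, and then builds witnesses by composing the chain retractions $\phi_{C_i}(x) = \max\{c \in C_i : c \leq x\}$ with correction maps $\gamma_i$ that collapse the interval $[y_1 \wedge y_2, y_i)$; this is somewhat delicate (the paper itself concedes a ``small technical obstacle'' that the $\gamma_i$ are there to repair). Your observation that the meet maps $\mu_t(x) = x \wedge t$ are automatically regressive and order-preserving lets you bypass the level lemma and the retractions entirely: $\mu_p, \mu_q$ evaluated at $p$ and $q$ immediately satisfy the hypotheses of the comparability lemma of Section \ref{subsec:comparability}, since $p \wedge q$ is strictly below both. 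This is cleaner and exploits the lattice hypothesis more directly. In the chain case the two arguments are essentially the same idea (a pair of maps that cross between levels $b$ and $c$); the paper reduces to the chain via $\phi_C$ and defines the witnesses there, while you define them globally ($g = \mu_b$ and your $f$ built from $x \wedge a$), and your order-preservation check for $f$ is sound. One trade-off worth noting: the paper's $\phi_C$-based machinery is reused almost verbatim in Theorems \ref{thm:2_Lip_regressive_XM_not_linear} and \ref{thm:k_Lip_regressive_XM_not_linear}, whereas meet maps can move points down many levels at once and so are not in general $1$-level-Lipschitz; your approach proves this theorem more cleanly but would not transfer to the Lipschitz refinements without modification.
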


First a technical lemma.

\begin{lem}
	\label{lem:level_lemma}
	Fix a finite poset $P$ and $x \in P$. If there are incomparable $y,z \leq x$ then there are incomparable $y^\prime \leq y$ and $z^\prime \leq z$ with $\level(y^\prime) = \level(z^\prime)$.
\end{lem}

\begin{proof}[Proof of Lemma \ref{lem:level_lemma}.]
	Without loss of generality, assume $m = \level(y) < \level(z) = n$. Let $C = \{x_1, \ldots, x_n = z\}$ be a chain that witnesses that $\level(z) = n$. Note that $y \notin C$, as $y$ and $z$ are incomparable. Since $\level(y) = \level(z_m)$ the elements are incomparable. So $y^\prime = y$ and $z^\prime = z_m$ are the desired elements.
\end{proof}

To streamline the argument, we introduce a function $\phi_C : P \rightarrow C$, where $C$ is a maximal chain in $P$. Define $\phi_C(x) = \max\{c \in C : c \leq x\}$. This map is well-defined since $P$ is a lattice, and $C$ contains the minimal point of $P$. It is easily checked that $\phi_C$ is regressive, order-preserving and maps onto $C$.

\begin{proof}[Proof of \ref{thm:all_regressive_XM_not_linear}.]
	Suppose that $P$ contains two incomparable elements, so that it must contain two distinct maximal chains $C_1$ and $C_2$. Note that they need not have the same cardinality, but they must both contain the maximal and minimal point of $P$.

	By the proof of Lemma \ref{lem:level_lemma}, there are distinct $y_1 \in C_1$, $y_2 \in C_2$ with the same level. Let $\phi_i := \phi_{C_i}$ as before the proof ($i = 1,2$). Typically, $\phi_1, \phi_2, y_1, y_2$ will be the desired witnesses to the theorem, although occasionally there is a small technical obstacle which is fixed by the following two functions.

	Let $\gamma_i : C_i \rightarrow C_i$ be the identity map except on the interval $[y_1 \wedge y_2, y_i)$, where it maps those points to $y_1 \wedge y_2$. Here the meet is computed in $C_1 \cap C_2$, which is well defined since both chains share the minimal element of $P$.

	It is easy to check that $\gamma_i$ is order-preserving and regressive (on $C_i$), so we get that $\gamma_i \phi_i$ is order-preserving and regressive (on $P$). It is also easy to check that $\gamma_i \phi_i (y_i) = y_i$, but $\gamma_1 \phi_1 (y_2) = \gamma_2 \phi_2 (y_1) = y_1 \wedge y_2$. Thus $\gamma_1 \phi_1, \gamma_2 \phi_2, y_1, y_2$ will be the desired witnesses to the theorem.

	Now suppose that $P$ contains a maximal chain $C$ of length at least $3$. By first applying $\phi_C$, we may assume that $P = C = \{a < b < c < \ldots\}$. Let $f$ be defined by $f(c) = f(b) = b$ and $f(a) = a$ (and the identity everywhere else). Let $g$ be defined by $g(c) = c$ and $g(b) = g(a) = a$ (and the identity everywhere else). Note that these are regressive, order-preserving maps on $C$, and $f(c) < g(c)$, but $f(b) > g(b)$. Thus $X(M)$ is not linear. 

	It is obvious that if $P$ is a chain of length $1$ or $2$ then $X(M)$ is linear.
\end{proof}

\subsection{
	\texorpdfstring{
		Function representation using only some regressive, OP functions
	}{
		\ref{subsec:lipschitz}. Function representation using only some regressive, OP functions
	}
}
\label{subsec:lipschitz}

Of course, a monoid representation does not need to use \textit{all} such regressive, order-preserving functions (for example, $I_n$ only uses the $1$-Lipschitz regressive functions).

We now state a version of Theorem \ref{thm:all_regressive_XM_not_linear} for a more restricted class of functions, that is still broad enough to be interesting.

\begin{dfn}
	A function $f: P \rightarrow P$ is \textbf{\emph{$k$-level-Lipschitz}} if whenever $\level(x) = \level(y)+1$ and $x > y$, then $\vert \level(f(x)) - \level(f(y)) \vert \leq k$.
\end{dfn}

In the case where $P$ is a linear order, this notion collapses to $1$-Lipschitz, so results about $I_n$ (in Section \ref{subsec:Catalan}) will be corollaries of results in this section.

We now present the analogous result of Theorem \ref{thm:all_regressive_XM_not_linear} for $k$-level-Lipschitz functions. Note that $k$-level-Lipschitz is more restrictive than $(k+1)$-level-Lipschitz.

\begin{thm}
	\label{thm:2_Lip_regressive_XM_not_linear}
	Let $k \geq 2$. Let $M = \mathcal{F}(P)$ be the monoid of all regressive, order-preserving, $k$-level-Lipschitz functions on the finite lattice $P$.

	$X(M)$ is linear if and only if $P$ is a chain of length $1$ or $2$.
\end{thm}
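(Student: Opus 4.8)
The plan is to prove the contrapositive of the interesting direction: if $P$ is not a chain of length $1$ or $2$, then $X(M)$ is not linear. The converse (a chain of length $1$ or $2$ forces $X(M)$ to be linear) is immediate, exactly as in Theorem~\ref{thm:all_regressive_XM_not_linear}. Throughout I would use the comparability criterion of Section~\ref{subsec:comparability}: to show $X(M)$ is not linear it suffices to produce $f,g \in M$ and $x,y \in P$ with $f(x) \not\leq g(x)$ and $f(y) \not\geq g(y)$. The essential new difficulty compared with Theorem~\ref{thm:all_regressive_XM_not_linear} is that the witnesses $f,g$ must now themselves be $k$-level-Lipschitz, so I cannot simply reuse the projections $\phi_C$; indeed $\phi_C$ need not be $k$-level-Lipschitz for any fixed $k$, since projecting a high-level element onto a maximal chain can drop its level arbitrarily far. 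I would split into two cases according to whether $P$ is a chain.

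First suppose $P$ is not a chain. Since distinct elements of equal level are incomparable, this is equivalent to some level of $P$ containing at least two elements; let $\ell$ be the least such level. Minimality forces every level below $\ell$ to be a singleton, so the elements of level $<\ell$ form a chain $c_1 < \cdots < c_{\ell-1}$, and every element of level at least $\ell-1$ lies above $m := c_{\ell-1}$. Pick two distinct (hence incomparable) elements $y_1,y_2$ at level $\ell$; then $\downarrow y_i = \{c_1,\ldots,c_{\ell-1},y_i\}$ and $y_1 \wedge y_2 = m$. I propose the witnesses $f = (\,\cdot \wedge y_1)$ and $g = (\,\cdot \wedge y_2)$, together with the points $y_1,y_2$. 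These are visibly regressive and order-preserving, and they separate the two points: $f(y_1) = y_1 \not\leq m = g(y_1)$, while $f(y_2) = m \not\geq y_2 = g(y_2)$.

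The crux is checking that $f$ and $g$ lie in $M$, i.e.\ are $k$-level-Lipschitz; this is the step I expect to be the main obstacle, precisely because the ``obvious'' candidates from Theorem~\ref{thm:all_regressive_XM_not_linear} fail the Lipschitz condition. Here the minimal-level structure does all the work, and in fact makes the meet maps $1$-level-Lipschitz (so this case does not even need $k \geq 2$). The image of $g$ lies in the chain $\downarrow y_2$, whose levels are exactly $1,\ldots,\ell$. Given $x > y$ with $\level(x) = \level(y)+1$, one checks by cases on $\level(y)$ that $\level(g(x))$ and $\level(g(y))$ differ by at most $1$: when $\level(y) \geq \ell$ both images lie in $\{m,y_2\}$; when $\level(y) \leq \ell-1$ the point $y$ (and usually $x$) is forced to equal a specific $c_i$, and the meet with $y_2$ then tracks levels exactly. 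The same computation applies to $f$. Replacing projections by the meet maps $\cdot \wedge y_i$ and localizing at the least non-trivial level is what rescues the argument.

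Finally, suppose $P$ is a chain of length at least $3$, say $c_1 < c_2 < c_3 < \cdots$. Here I would reuse the witnesses from the chain case of Theorem~\ref{thm:all_regressive_XM_not_linear}: let $f$ collapse $c_3 \mapsto c_2$ (fixing everything else) and let $g$ collapse $c_2 \mapsto c_1$ (fixing everything else). Then $f(c_2) \not\leq g(c_2)$ and $f(c_3) \not\geq g(c_3)$, so the comparability criterion applies. A direct check shows that each of $f$ and $g$ changes levels across consecutive-level pairs by at most $2$, and by exactly $2$ at one pair, so they are $k$-level-Lipschitz precisely because $k \geq 2$. This is exactly where the hypothesis $k \geq 2$ enters, and it explains why length-$3$ chains are excluded here although they are permitted in the $1$-level-Lipschitz analogue.
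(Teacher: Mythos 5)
Your argument is correct, and it takes a genuinely different route from the paper's. The paper treats every lattice other than a chain of length $1$ or $2$ uniformly: any such lattice contains a maximal chain $C$ of length at least $3$ (for incomparable $a,b$ one has $a\wedge b < a < a\vee b$), and the witnesses are $f\gamma\phi_C$ and $g\gamma\phi_C$, where $\gamma$ collapses $C$ onto its bottom three elements and $f,g$ are the chain witnesses from Theorem \ref{thm:all_regressive_XM_not_linear}; the point is that a map whose image spans only three consecutive levels of $P$ is automatically $2$-level-Lipschitz. You instead split into the non-chain and chain cases, and in the non-chain case replace $\phi_C$ by the meet maps $x \mapsto x\wedge y_i$ taken at the first level where $P$ branches. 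These are in fact $1$-level-Lipschitz and coincide with the maps $f_a, f_b$ that the paper reserves for the harder Theorem \ref{thm:k_Lip_regressive_XM_not_linear}, so your proof is essentially that stronger argument plus the easy chain case --- exactly the subsumption the paper itself remarks on after Theorem \ref{thm:k_Lip_regressive_XM_not_linear}. Your suspicion about $\phi_C$ is well founded and even touches a soft spot in the paper's write-up: the third element of an arbitrarily chosen maximal chain of length at least $3$ need not sit at level $3$ of $P$ (it can be covered from below by a much longer chain outside $C$), in which case $g\gamma\phi_C$ sends a pair of elements on consecutive levels to images whose levels differ by far more than $2$; the paper's bound ``image in levels $1,2,3$'' really requires $C$ to be chosen of maximum length. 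Your decomposition avoids this issue entirely, at the cost of two cases instead of one, and it makes visible exactly where the hypothesis $k\geq 2$ is used (only on chains of length at least $3$).
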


\begin{proof}
	We make a small adjustment to the argument presented in the proof of Theorem \ref{thm:all_regressive_XM_not_linear}. Observe that if $P$ has a (maximal) chain $C$ of length at least $3$ then we can use $\phi_C$ to map onto it, and then the map $\gamma$ which fixes the bottom two elements of $C$ and sends every other element of $C$ to the element of $C$ third from the bottom. Then we apply $f$ or $g$ as in the proof. Since $f \gamma \phi_C$, and $g \gamma \phi_C$ map $P$ into the levels $1,2$ and $3$, they must be $2$-level-Lipschitz.

	In the case that there are no chains of length $3$ in $P$, then all maps will be trivially $2$-level-Lipschitz and the argument in the first part of the proof of Theorem \ref{thm:all_regressive_XM_not_linear} will be applicable here.
\end{proof}

The case of $1$-level-Lipschitz is trickier because it excludes some of the maps we used in the previous arguments. 

\begin{thm}
	\label{thm:k_Lip_regressive_XM_not_linear}
	Let $M = \mathcal{F}(P)$ be the monoid of all regressive, order-preserving, $1$-level-Lipschitz functions on the finite lattice $P$.

	$X(M)$ is linear if and only if $P$ is a chain of length $1,2$ or $3$.
\end{thm}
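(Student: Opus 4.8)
The plan is to reuse the comparability lemma from Section~\ref{subsec:comparability}: to show $X(M)$ is not linear it suffices to find $f,g \in M$ and $x,y \in P$ with $f(x) \not\leq g(x)$ and $f(y) \not\geq g(y)$. I would prove the hard direction by contraposition, splitting the hypothesis ``$P$ is not a chain of length $1,2,3$'' into the cases $P$ a chain of length $\geq 4$ and $P$ not a chain, and dispose of the easy direction (chains of length $1,2,3$) by directly computing the left cosets.

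For a chain $P = \{0 < 1 < \cdots < n-1\}$ with $n \geq 4$, note that on a chain the conditions regressive, order-preserving and $1$-level-Lipschitz amount to $f(0)=0$, $f$ nondecreasing, and $f(i+1)-f(i) \in \{0,1\}$. I would take $f$ constantly $1$ on the indices $\geq 1$ and $g$ the Tetris map $g(i)=\max(0,i-1)$; both lie in $M$, and at $x=1$, $y=3$ we get $f(1)=1>0=g(1)$ and $f(3)=1<2=g(3)$, so the lemma applies. (In particular this reproves that $X(I_4)$ is not linear.)

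The non-chain case is the heart of the matter, and the obstacle is precisely that the maps $\phi_C$ and $\gamma$ from the proofs of Theorems~\ref{thm:all_regressive_XM_not_linear} and \ref{thm:2_Lip_regressive_XM_not_linear} need not be $1$-level-Lipschitz. The device I would use is to pass to the \emph{minimal} level $\ell$ at which two incomparable elements occur. Since distinct points of a common level are incomparable, minimality forces each level below $\ell$ to be a singleton, yielding a chain $c_1 < \cdots < c_{\ell-1}$ in which $c_j$ lies below every element of level $\geq j$. Consequently any incomparable $a,b$ of level $\ell$ satisfy $a \wedge b = c_{\ell-1}$, and $\downarrow a = \{c_1 < \cdots < c_{\ell-1} < a\}$ and $\downarrow b$ are chains with one point on each of the levels $1,\dots,\ell$. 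I would then take $f = (\,\cdot \wedge a\,)$ and $g = (\,\cdot \wedge b\,)$; these are regressive and order-preserving, and they separate $a,b$ since $f(a)=a \not\leq c_{\ell-1}=g(a)$ while $f(b)=c_{\ell-1}\not\geq b=g(b)$.

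The crux --- and the only place minimality is genuinely used --- is verifying that $\cdot \wedge a$ is $1$-level-Lipschitz; for a poorly chosen $a$ a meet map can fall two levels across a covering pair, so minimality is essential. I would establish that $\level(w\wedge a) = \level(w)$ when $\level(w)\leq \ell-1$ (as then $w = c_{\level(w)} \leq a$) and $\level(w\wedge a)\in\{\ell-1,\ell\}$ when $\level(w)\geq \ell$ (as then $c_{\ell-1} \leq w\wedge a \leq a$ and $\downarrow a$ has no point strictly between $c_{\ell-1}$ and $a$). A three-case check on a covering pair $x>z$ then bounds $|\level(x\wedge a)-\level(z\wedge a)|$ by $1$, and symmetrically for $b$; the comparability lemma then finishes this case. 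For the remaining (easy) direction I would just compute $M$ when $P$ is a chain of length $1,2,3$: the point is that $1$-level-Lipschitzness deletes exactly the ``crossing'' maps responsible for non-linearity earlier --- on $\{a<b<c\}$ the map $b\mapsto a,\ c\mapsto c$ is no longer admissible --- leaving $M=\{\mathrm{id},\ c\mapsto b,\ (b\mapsto a,\,c\mapsto b),\ \mathrm{const}_a\}$, whose four left cosets are nested, with lengths $1$ and $2$ immediate.
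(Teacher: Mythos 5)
Your proof is correct and follows essentially the same route as the paper: the non-chain case uses the minimal level containing incomparable elements, and your meet maps $\,\cdot \wedge a$ and $\,\cdot \wedge b$ coincide pointwise with the paper's explicitly defined $f_a$ and $f_b$, while the chain case reuses the $I_4$ witnesses exactly as the paper does. Your verification of the $1$-level-Lipschitz condition and of the easy direction for chains of length $3$ is in fact more detailed than the paper's.
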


\begin{proof}
	Suppose that $P$ contains incomparable elements. Let $a,b$ be elements on the minimal level that contains incomparable elements. Then $a \wedge b$ is precisely one level below $a$ and $b$ and the set $\{x \in P : x \leq a \wedge b\}$ is linearly ordered.

	Define the map $f_a : P \rightarrow P$ by 
	\[
	  f_a(x) =
	  \begin{cases}
	           x & \text{if $x \leq a \wedge b$} \\
	           a & \text{if $x \geq a$} \\
	  a \wedge b & \text{if otherwise}
	  \end{cases}
	\]
	Define $f_b$ similarly. It is easy to check that both functions are order-preserving and regressive. To see that $f_a$ (for example) is $1$-level-Lipschitz, we note that everything above $a \wedge b$ gets sent to $a$ or $a \wedge b$ which are exactly a level apart. Everything else gets fixed. Finally, note that $f_a(a) = a > a \wedge b = f_b(a)$ and $f_a(b) = a \wedge b < b = f_b(b)$, so $X(M)$ is not linear by Lemma \ref{lem:level_lemma}.

	Now suppose that $P$ is a chain. It is easy to check by hand that $X(M)$ is linear if $P$ is a chain of length $1,2$ or $3$. If $P$ contains at least $4$ elements, then we may use the argument for $I_4$ in Section \ref{subsec:Catalan} to show that $X(M)$ is not linear (after possibly mapping everything above the fourth smallest element to the fourth smallest element).
\end{proof}

Of course, this argument almost entirely subsumes the arguments for Theorem \ref{thm:all_regressive_XM_not_linear} and Theorem \ref{thm:2_Lip_regressive_XM_not_linear} (except for the special case of a chain of length $3$). 

\subsection{
	\texorpdfstring{
		Almost $\mathcal{R}$-trivial
	}{
		\ref{subsec:almost_r_trivial}. Almost R-trivial
	}
}
\label{subsec:almost_r_trivial}

We now explore how to identify when $X(M)$ is not linear in the case where $M$ is an almost $\mathcal{R}$-trivial monoid presented using order-preserving, regressive maps on some poset, and constant maps (as in Theorem \ref{thm:almost_R_trivial_rep}).

\begin{lem}
	Fix a finite poset $P$, and let $M$ be a collection of regressive maps on $P$, let $N$ be a collection of constant maps on $P$ that take minimal values. $X(M \cup N)$ is linear if and only if $X(M)$ is linear.
\end{lem}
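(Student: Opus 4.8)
The plan is to describe the poset $X(K)$, where $K := M \cup N$, entirely in terms of $X(M)$ together with one extra point, and then to observe that this extra point is a global minimum, which cannot affect linearity.

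First I would compute all left cosets of $K$. Two facts from the function representation of Theorem~\ref{thm:almost_R_trivial_rep} drive everything: each $c \in N$ is left-absorbing, so $xc = c$ for every $x \in K$; and a constant at a minimal value is unchanged by a regressive map, so $cx = c$ for every $c \in N$ and $x \in M$. The first fact gives $fc = c$ whenever $f \in M$ and $c \in N$, hence for $f \in M$
\[
	fK = \{\,fx : x \in M\,\} \cup \{\,fc : c \in N\,\} = fM \cup N.
\]
Both facts together give, for $c \in N$, that $cx = c$ for $x \in M$ while $cc' = c'$ for $c' \in N$ (the latter again from left-absorption), so $cK = \{c\} \cup N = N$. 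Therefore
\[
	X(K) = \set{fM \cup N}{f \in M} \cup \{N\},
\]
and since $N \sse fM \cup N$ for every $f$, the coset $N$ is the least element of $X(K)$.

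The core of the argument is the equivalence $fK \sse gK \iff fM \sse gM$ for $f,g \in M$. The direction $\Leftarrow$ is immediate from $fK = fM \cup N$ and $gK = gM \cup N$. For $\Rightarrow$, note that $f \in fK \sse gK = gM \cup N$. If $f \in gM$, say $f = gy$, then $fx = g(yx) \in gM$ for all $x \in M$, so $fM \sse gM$. The only alternative is $f \in N$; since $f$ is also regressive (being in $M$), this forces $f$ to be a constant map taking a minimal value, which is possible only when $P$ has a global minimum $\hat{0}$ and $f = c_{\hat{0}}$. In that case $fM = \{c_{\hat{0}}\}$ is already the least element of $X(M)$, so $fM \sse gM$ holds trivially. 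I expect this degenerate subcase to be the only genuinely delicate point of the proof: I must rule out that the appended constants manufacture a new comparison between cosets that was absent in $X(M)$.

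Granting the equivalence, the assignment $fM \mapsto fM \cup N$ is well-defined, injective, order-preserving, and order-reflecting, hence an order-isomorphism from $X(M)$ onto $X(K) \setminus \{N\}$ (in the degenerate case its image is all of $X(K)$). Thus $X(K)$ is a copy of $X(M)$ with the global minimum $N$ adjoined. Since in any finite poset a global minimum is comparable to every element, such a poset is linear if and only if the poset obtained by deleting that minimum is linear; applying this to $X(K)$ yields that $X(K)$ is linear if and only if $X(M)$ is linear, as required.
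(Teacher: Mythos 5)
Your proof is correct and is essentially an expanded version of the paper's one-line argument: both identify that the constant maps collapse into a single coset $N$ sitting at the bottom of $X(M \cup N)$, with the remaining cosets forming an order-isomorphic copy of $X(M)$, so that adjoining a global minimum cannot affect linearity. You fill in the details the paper leaves implicit --- notably the equivalence $fK \subseteq gK \iff fM \subseteq gM$ and the degenerate case where a constant map is itself regressive --- and you correctly work with the composition convention under which constants at minimal values are left-absorbing, which is the convention the almost $\mathcal{R}$-trivial representation requires for the statement to hold.
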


\begin{proof}
	If $P$ has a unique minimal point, then $X(M) = X(M \cup N)$, and if $P$ has many minimal points then $n_1 M = n_2 M$ for all $n_1, n_2 \in N$, and this coset will be minimal in $X(M \cup N)$, and the result is clear.
\end{proof}

In other words, the ``almost'' part of an almost $\mathcal{R}$-trivial monoid of regressive functions plays no role in whether it is Ramsey.

\section{
	\texorpdfstring{
		Two other natural monoids
	}{
		\ref{sec:two_other_monoids}. Two other natural monoids
	}
}
\label{sec:two_other_monoids}

We present two well-studied $\mathcal{R}$-trivial monoids arising from reflection groups: the class of 0-Hecke monoids and the class of hyperplane face monoids. The key observation is that these monoids are defined in terms of generators and relations. Except in degenerate cases, the generators will be unrelated in $M$, which will ensure that $X(M)$ is non-linear.

We provide an implementation in Sage that computes the $X(M)$ of 0-Hecke monoids and hyperplane face monoids, \cite{code}.

\subsection{
	\texorpdfstring{
		0-Hecke monoids
	}{
		\ref{subsec:0-hecke}. 0-Hecke monoids
	}
}
\label{subsec:0-hecke}

\subsubsection{
	\texorpdfstring{
		Definition
	}{
		\ref{subsubsec:0-hecke_def}. Definition
	}
}
\label{subsubsec:0-hecke_def}

Our presentation follows those presented in \cite{DHST} and \cite{norton}. These monoids are defined in terms of Coxeter groups, which we'll define first.

\begin{dfn}
	A group $W$ is a \textbf{\emph{Coxeter group}} if it admits a presentation of the following form:
	\[
		W = \grouppres{s_1, \dots, s_n}{(s_i s_j)^{m_{ij}}}. 
	\]
	where $m_{ii} = 1$ for all $1 \leq i \leq n$, and $2 \leq m_{ij} \leq \infty$ for all $i \neq j$. 
	
	These same relations can be expressed in the following form:
	\begin{itemize}
		\setlength{\itemsep}{0pt}
		\item $s_i^2 = 1$ for all $i$. 
		\item ``Braid relation'': $\underbrace{s_i s_j s_i \cdots s_j}_{m_{ij}} = \underbrace{s_j s_i s_j \cdots s_i}_{m_{ij}}$. 
	\end{itemize}
	(It is easy to see these two presentations are equivalent.)
\end{dfn}

Coxeter groups have a strong relationship to reflection groups. Of particular note is that every finite reflection group is a Coxeter group, but this need not be true for infinite reflection groups, see \cite{Speyer}. Notably, the finite symmetric groups $S_n$ are all Coxeter groups; $S_n$ is the finite Coxeter group of type $A_{n-1}$.

\begin{dfn}
	Given a finite Coxeter group $W$ (presented using the same symbols as above), the associated \textbf{\emph{0-Hecke monoid}} is the monoid $H_0(W)$ generated by idempotent generators $\pi_1, \dots, \pi_n$ satisfying the same braid relation as above. 
\end{dfn}

The following is well known (and follows immediately from Corollary \ref{cor:word_ext}). In fact, it is $\mathcal{J}$-trivial, see \cite{DHST}.

\begin{thm}
	For any finite Coxeter group $W$, $H_0(W)$ is $\mathcal{R}$-trivial.
\end{thm}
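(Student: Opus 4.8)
The plan is to equip $H_0(W)$ with a length function that is monotone under right multiplication, and strictly monotone except exactly when the multiplication is absorbed; the $\mathcal{R}$-triviality then falls out of a short telescoping argument.

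First I would recall the structure theory of $H_0(W)$. By Matsumoto's (Tits') theorem, any two reduced words for an element $w \in W$ differ by braid moves alone; since the generators $\pi_i$ satisfy exactly these braid relations (together with idempotency), the element $\pi_w := \pi_{i_1}\cdots\pi_{i_k}$, defined from any reduced expression $s_{i_1}\cdots s_{i_k}$ of $w$, does not depend on the chosen expression. Using $\pi_i^2 = \pi_i$ and the braid relations, every word in the generators rewrites as some $\pi_w$, so $H_0(W) = \set{\pi_w}{w \in W}$ and $w \mapsto \pi_w$ is a bijection. Consequently $\ell(\pi_w) := \ell(w)$, the Coxeter length, is a well-defined function $\ell : H_0(W) \to \N$. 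This is precisely the information packaged in Corollary \ref{cor:word_ext}.

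Next I would record the right-multiplication rule by a generator, which follows from idempotency and the exchange condition:
\[
	\pi_w \pi_i =
	\begin{cases}
		\pi_{ws_i}, & \ell(ws_i) > \ell(w), \\
		\pi_w, & \ell(ws_i) < \ell(w).
	\end{cases}
\]
Since $\ell(ws_i) = \ell(w) \pm 1$ always, these cases are exhaustive, and they yield the key monotonicity: for every $m \in H_0(W)$ and every generator $\pi_i$,
\[
	\ell(m) \le \ell(m\pi_i) \le \ell(m) + 1,
	\qquad \text{and} \qquad
	\ell(m\pi_i) = \ell(m) \iff m\pi_i = m.
\]
Inducting on word length then gives $\ell(ma) \ge \ell(m)$ for every $a \in H_0(W)$.

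With these tools the theorem is immediate. Suppose $m \,\mathcal{R}\, n$, so that $mM = nM$, and pick $a, b$ with $ma = n$ and $nb = m$. Monotonicity gives $\ell(m) \le \ell(n) \le \ell(m)$, hence $\ell(m) = \ell(n)$. Writing $a = \pi_{i_1}\cdots\pi_{i_r}$ and setting $m_0 = m$, $m_j = m\pi_{i_1}\cdots\pi_{i_j}$ (so $m_r = n$), the chain $\ell(m_0) \le \cdots \le \ell(m_r)$ has equal endpoints, forcing $\ell(m_{j-1}) = \ell(m_j)$ at each step; by the equality criterion above, $m_j = m_{j-1}\pi_{i_j} = m_{j-1}$ throughout, so $n = m_r = m_0 = m$. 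Thus every $\mathcal{R}$-class is a singleton. The only real obstacle lies in the first step: obtaining a well-defined length function, which rests entirely on reduced words being connected by braid moves alone; everything after that is a short computation.
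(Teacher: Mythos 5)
Your proof is correct, and it takes a somewhat different route from the paper. The paper gives no argument at all beyond the remark that the theorem ``follows immediately from Corollary~\ref{cor:word_ext}'': there, $\pi_x M \subseteq \pi_y M$ holds iff $y$ is an initial subword of $x$, so mutual containment of cosets forces $x = y$ by antisymmetry of the prefix (weak) order. You instead bypass the coset description entirely and run a potential-function argument: the Coxeter length descends to $H_0(W)$, right multiplication by a generator either increases it by one or acts trivially, and a telescoping chain with equal endpoints must be trivial at every step. Both arguments rest on the same structure theory (Matsumoto's theorem and the multiplication rule $\pi_w\pi_i \in \{\pi_{ws_i}, \pi_w\}$), and indeed the paper's ``immediate'' deduction secretly uses the length function too, to see that two words each an initial subword of the other coincide. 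Your version is more self-contained, since it proves the local multiplication rule rather than citing the global description of $\pi_x M$; the paper's version buys the stronger statement that $X(M)$ is literally the poset of elements of $W$ under the weak order, which is what it actually needs later. One small point of care: for $\ell(\pi_w) := \ell(w)$ to be well defined you need not only that $\pi_w$ is independent of the chosen reduced word (Matsumoto), but also that $w \mapsto \pi_w$ is injective, i.e.\ $|H_0(W)| = |W|$; this is standard (one exhibits a faithful action of $H_0(W)$ on $W$ by the operators $w \mapsto ws_i$ if $\ell(ws_i) > \ell(w)$ and $w \mapsto w$ otherwise, under which $\pi_w$ sends the identity to $w$), but it is not a consequence of braid-connectedness of reduced words alone, so it deserves an explicit citation rather than being folded into the Matsumoto step.
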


\subsubsection{
	\texorpdfstring{
		Example computations for $A_2$
	}{
		\ref{subsubsec:0-hecke_computation}. Example computations for A2
	}
}
\label{subsubsec:0-hecke_computation}

For our purposes, even the small Coxeter group (called $A_2$) on generators $a,b$ with $ab = ba$ is interesting.

We look at $H = H_0(W)$ given by generators $\pi_a,\pi_b$ and relation $\pi_a \pi_b = \pi_b \pi_a$. Note that the word $\pi_{aba} = \pi_{bab}$ starts with both $\pi_a$ and $\pi_b$. In the following table we compute the left cosets of each element of $H$, and the accompanying diagram we present a visual representation of the partial order $(X(H), \sse)$ in which each $m H$ is identified with $m$, which is in turn identified with its subscript. 

\bigskip

\begin{table}[h]
\centering
	\begin{tabular}{c@{\qquad \qquad}c}
		\begin{tabular}{r|l}
			$m$                     & $m H$                                                  \\ \hline
			$\text{id}$             & $\{\text{id}, \pi_a, \pi_b, \pi_{ab}, \pi_{ba}, \pi_{aba}\}$ \\
			$\pi_a$                 & $\{\pi_a, \pi_{ab}, \pi_{aba}\}$                             \\
			$\pi_b$                 & $\{\pi_b, \pi_{ba}, \pi_{aba}\}$                             \\
			$\pi_{ab}$              & $\{\pi_{ab}, \pi_{aba}\}$                                    \\
			$\pi_{ba}$              & $\{\pi_{ba}, \pi_{aba}\}$                                    \\
			$\pi_{aba} = \pi_{bab}$ & $\{\pi_{aba}\}$
		\end{tabular}
	&
		$
   		\begin{tikzcd}
      		& \operatorname{id} & \\
      		b 
        	\arrow[ur]
         	& & 
      		a 
         		\arrow[ul] \\
      		ba
         	\arrow[u]
         	& & 
     		ab 
         	\arrow[u] \\
      		&
     		aba = bab
         	\arrow[ul]
         	\arrow[ur] &
   		\end{tikzcd}
		$
	\end{tabular}
\end{table}

\noindent Notice that this $X(H)$ is not linear.


\subsubsection{
	\texorpdfstring{
		Basic results about $X(M)$, for $M=H_0(W)$
	}{
		\ref{subsubsec:0-hecke_basics}. Basic results about X(M), for M=H0(W)
	}
}
\label{subsubsec:0-hecke_basics}

Let $M=H_0(W)$, where $W$ is a finite Coxeter group. $X(M)$ is the partial order of left cosets of $M$, ordered by subset. We draw some general (well-known) conclusions.

\begin{fact}
	For $\pi_x \in M$ with $x\in W$, then $\pi_x M$ is the collection of all $\pi_y$ such that $x$ is an initial subword of $y$.
\end{fact}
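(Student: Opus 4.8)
The plan is to prove the set equality $\pi_x M = \set{\pi_y}{x \text{ is an initial subword of } y}$ by establishing both inclusions, using as the only engine the single-generator multiplication rule for the $0$-Hecke monoid (the content of Corollary \ref{cor:word_ext}): for $w \in W$ and a generator $s_i$,
\[
	\pi_w \pi_i =
	\begin{cases}
		\pi_{w s_i} & \text{if } \ell(w s_i) > \ell(w), \\
		\pi_w & \text{if } \ell(w s_i) < \ell(w),
	\end{cases}
\]
where $\ell$ denotes Coxeter length. If Corollary \ref{cor:word_ext} is stated for left multiplication, the right-hand version follows because reversing words is an anti-automorphism of $M$: both the idempotent relations and the braid relations are preserved under reversal. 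Throughout I read ``$x$ is an initial subword of $y$'' as the existence of a length-additive factorization $y = x w'$ with $\ell(y) = \ell(x) + \ell(w')$, equivalently that some reduced word for $y$ begins with a reduced word for $x$; I also use that every element of $M$ has the form $\pi_w$ for some $w \in W$, so that $\pi_x M = \set{\pi_x \pi_w}{w \in W}$.

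For the inclusion $\pi_x M \subseteq \set{\pi_y}{x \text{ initial subword of } y}$, I would fix $w \in W$, choose a reduced word $w = s_{i_1} \cdots s_{i_k}$, and evaluate $\pi_x \pi_w = \pi_x \pi_{i_1} \cdots \pi_{i_k}$ one generator at a time. The invariant, proved by induction on $k$, is that every partial product equals $\pi_z$ for some $z$ having $x$ as an initial subword. At each step the multiplication rule sends the current $\pi_z$ either to $\pi_z$ itself or to $\pi_{z s_i}$ with $\ell(z s_i) = \ell(z) + 1$; in the latter case $z$ is an initial subword of $z s_i$, and ``is an initial subword of'' is transitive, so the invariant survives both kinds of step. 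Hence $\pi_x \pi_w = \pi_z$ with $x$ an initial subword of $z$.

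For the reverse inclusion I would start from a length-additive factorization $y = x w'$ and a reduced word $w' = s_{j_1} \cdots s_{j_m}$, so that the reduced word for $x$ followed by $s_{j_1} \cdots s_{j_m}$ is a reduced word for $y$. Because the factorization is length-additive, each of the $m$ right multiplications must strictly increase length (the length rises from $\ell(x)$ to $\ell(x)+m$ in exactly $m$ unit steps, so no step can decrease it). The multiplication rule therefore lands in its ``increase'' case every time, giving $\pi_x \pi_{s_{j_1}} \cdots \pi_{s_{j_m}} = \pi_{x w'} = \pi_y$. Since $\pi_{w'} = \pi_{s_{j_1}} \cdots \pi_{s_{j_m}}$, this exhibits $\pi_y = \pi_x \pi_{w'} \in \pi_x M$.

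The only genuine obstacle is the generator multiplication rule itself, which packages the exchange/deletion combinatorics of reduced words in $W$ together with the well-definedness of $\pi_w$; once it is available through Corollary \ref{cor:word_ext}, both inclusions reduce to short inductions on word length. The two points requiring mild care are the left/right bookkeeping — ensuring I invoke the right-multiplication form of the rule, which is the one matching the left cosets $\pi_x M$ — and the observation that ``initial subword'' is transitive and coincides with length-additivity, as this is exactly what lets the inductive invariant pass through the length-preserving steps untouched.
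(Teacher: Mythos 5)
The paper records this statement as a bare ``Fact'' with no proof (it is presented as well known), so there is no argument of the paper's to compare against; your proof supplies the standard argument and its mathematical content is correct. Both inclusions are handled properly: in the forward direction your inductive invariant survives the length-preserving steps because the weak right order (``is an initial subword of'') is transitive, and in the reverse direction the length-additivity of $y = xw'$ forces every right multiplication into the length-increasing case, yielding $\pi_y = \pi_x \pi_{w'} \in \pi_x M$. The one thing you must repair is the attribution of your ``engine.'' The single-generator rule --- $\pi_w \pi_i = \pi_{w s_i}$ when $\ell(w s_i) > \ell(w)$ and $\pi_w \pi_i = \pi_w$ otherwise --- is \emph{not} the content of Corollary \ref{cor:word_ext}: that corollary asserts that the containment $\pi_x M \sse \pi_y M$ holds exactly when $y$ is an initial subword of $x$, and in the paper it is deduced \emph{from} the very Fact you are proving, so invoking it here would be circular. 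The multiplication rule is an independent, logically prior fact about $0$-Hecke monoids --- it packages Matsumoto's word property and the well-definedness of $w \mapsto \pi_w$ --- and should be quoted from \cite{DHST} or \cite{norton} (or proved separately from the braid and idempotent relations) rather than from anything downstream in this section. With that citation corrected, your argument is complete and supplies exactly the proof the paper leaves implicit.
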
 

\begin{cor}
	\label{cor:word_ext}
	For $\pi_x,\pi_y$ words in $M$ with $x,y$ words in $W$, then 
	\[
		\pi_x M \sse \pi_y M \Leftrightarrow y \text{ is an initial subword of } x \text{ in } W.
	\]
\end{cor}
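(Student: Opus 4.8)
The plan is to read this off directly from the preceding Fact, which identifies each coset $\pi_x M$ with the set of $\pi_w$ for which $x$ is an initial subword of $w$; both implications then become elementary statements about the prefix relation, with no further structural input about $H_0(W)$ required.

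For the forward implication, I would observe that $\pi_x = \pi_x \cdot 1_M \in \pi_x M$ (equivalently, $x$ is an initial subword of itself). If $\pi_x M \sse \pi_y M$, this forces $\pi_x \in \pi_y M$, and applying the Fact to the coset $\pi_y M$ yields at once that $y$ is an initial subword of $x$.

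For the reverse implication, suppose $y$ is an initial subword of $x$. I would fix a reduced word $s_{i_1}\cdots s_{i_k}$ for $x$ having $s_{i_1}\cdots s_{i_j}$ as a reduced word for $y$, and set $z = s_{i_{j+1}}\cdots s_{i_k}$. Since the assignment of a word to its element of $M$ is multiplicative, $\pi_x = \pi_y \pi_z$, whence $\pi_x M = \pi_y(\pi_z M) \sse \pi_y M$. (Alternatively one can argue straight from the Fact: any $\pi_w \in \pi_x M$ has $x$ as an initial subword, hence $y$ as an initial subword by transitivity of the prefix relation, so $\pi_w \in \pi_y M$.)

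The only point to keep honest is that both $\pi_x$ and the relation ``initial subword'' are defined on elements of $W$ rather than on individual words; this rests on the standard facts that $\pi_w$ is independent of the chosen reduced word for $w$ (a consequence of the braid relations) and that any subword of a reduced word is again reduced. Granting these routine features of the $0$-Hecke monoid, the corollary is immediate, and I anticipate no genuine obstacle.
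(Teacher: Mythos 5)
Your argument is correct and is exactly the route the paper intends: the corollary is stated as an immediate consequence of the preceding Fact, and your two implications (using $\pi_x \in \pi_x M$ for the forward direction, and $\pi_x = \pi_y\pi_z$ or transitivity of the prefix relation for the reverse) spell out precisely the details the paper leaves to the reader. No discrepancy to report.
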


This corollary makes precise the notion that when $M$ is defined in terms of generators and relations, $X(M)$ is really just the partial order of words ordered by end extension.

\subsection{
	\texorpdfstring{
		Hyperplane face monoids
	}{
		\ref{subsec:hyperplane_face}. Hyperplane face monoids
	}
}
\label{subsec:hyperplane_face}

This object appears with slightly different definitions in the literature. The most intuitively clear definition is the one in \cite{BRO04}, which we (more or less) reproduce first here. A more formal one from \cite{SAL09} is reproduced next. See also \cite{BHR99}.

\begin{dfn}
	Let $\A$ be a finite collection of hyperplanes in $\R^d$ for some integer $d$ (ie. in a real vector space). $\A$ is called an \textbf{\emph{arrangement}} of hyperplanes. We assume without loss of generality that $\bigcap_{H \in \A} H = \{0\}$. (There is no loss of generality since if the common intersection of the hyperplanes in $\A$ is nontrivial, we can quotient the vector space by this intersection and work in the quotient. Alternatively one can just only define these monoids for arrangements that satisfy this property.)

	The hyperplanes in $\A$ divides $\R^d$ into regions called \textbf{\emph{chambers}}. In $\R^3$, these regions are ``triangular cones'', and therefore have three faces each, which are shaped like two-dimensional sectors. Each of these faces has two lines that form its boundary. Finally, each of these lines has a single point (the origin) that forms its boundary. 

	Let $\F$ be the collection of all the open chambers formed by $\A$, their (relatively) open boundary faces, the (relatively) open boundaries of \emph{those} faces, and so on for all dimensions up to and including the singleton $\{0\}$. The elements of $\F$ are all simply called \textbf{\emph{faces}} (and in particular the chambers themselves are also called faces). 

	Define a binary operation on $\F$ in the following way. Given two faces $A, B \in \F$, define $AB$ to the face one enters first when moving a small positive distance along any straight line from $A$ to $B$.
\end{dfn}

\begin{ex}
	Let $\A$ be three skew lines through the origin in $\R^2$. The semigroup $\F$ corresponding to this arrangement has 13 elements: the 6 chambers, 6 rays, and the origin. The image below illustrates an example of the binary operation. 

	\begin{center}
		\includegraphics[scale=0.6]{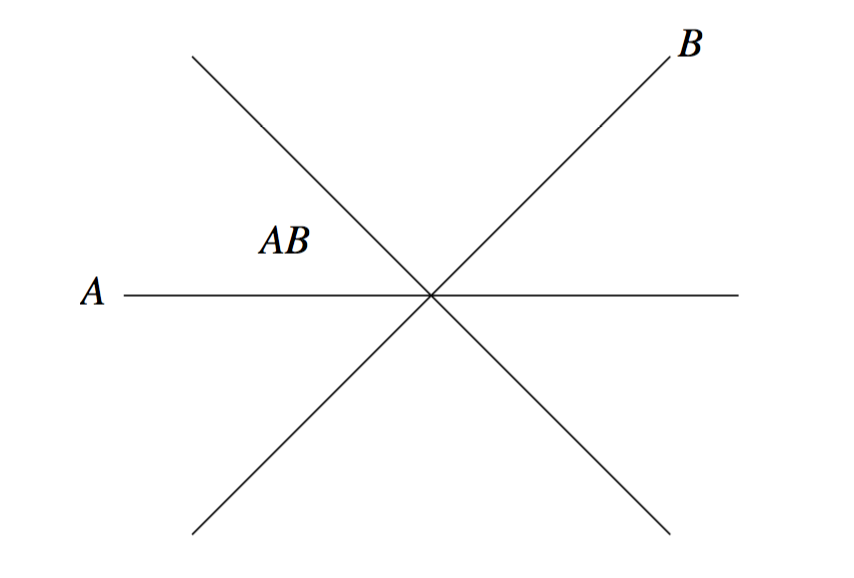}
	\end{center}

	When thinking about this operation, remember that the faces are all (relatively) open. 

	For another example from the same picture, if $C$ is the ray opposite from $A$, then $AC = A$. 
\end{ex}

Now a slightly more formal treatment of the above. 

\begin{dfn}
	Let $\A$ be an arrangement as in the previous definition. We define a \textbf{\emph{face}} to be any set obtained by intersecting, for each $H \in \A$, either $H$ itself or one of the open half-spaces formed by $H$. A more formal way to define this is as follows.

	For each $H \in \A$, let $H^-$ and $H^+$ be the two open half-spaces formed by $H$. The choice of which one is which is arbitrary, but should stay fixed. We also define $H^0 = H$. A \textbf{\emph{face}} $A$ is a nonempty set of the form
	\[
		A = \bigcap_{H \in \A} H^{\sigma_H(A)},
	\]
	where $\sigma_H(A) \in \{-, +, 0\}$. The finite sequence $\langle \sigma_H(A) \rangle_{H \in \A} \in \{-, +, 0\}^\A$ therefore uniquely specifies the face $A$. It is denoted by $\sigma(A)$, and called the \textbf{\emph{sign sequence}} of $A$. Note that the chambers are precisely those faces $A$ such that $\sigma_H(A) \neq 0$ for all $H \in \A$. 

	Then $\F$ is the set of all faces, as determined by the above definition. (Note that this definition doesn't imply that $|\F| = 3^{|\A|}$, since many of these intersections could be empty.) 

	Given this framework, we can define the binary operation algebraically:
	\[
		\sigma_H(AB) = 
		\begin{cases}
			\sigma_H(A) & \text{if } \sigma_H(A) \neq 0 \\
			\sigma_H(B) & \text{if } \sigma_H(A) = 0 
		\end{cases}
	\]
\end{dfn}

\begin{remark}
	A way to think about this operation: take the sign sequences for $A$ and $B$ (in the same order, of course) and stack them on top of each other, with $\sigma(A)$ and top of $\sigma(B)$. If there are any occurrences of $0$ in the top row, replace that entry with the corresponding entry from the bottom row. 

	At the end of this process, the new top row is the sign sequence of $AB$.
\end{remark}

From the above definition in particular it follows that this operation is associative. Also, the origin is the identity element since $\sigma_H(\{0\}) = 0$ for all $H$ by assumption. 

There is a natural partial order on $\F$ as well. Given two faces $A, B \in \F$, we let:
\[
	A \leq B \Longleftrightarrow \text{for all } H \in \A, \text{ either } \sigma_H(A) = 0 \text{ or } \sigma_H(A) = \sigma_H(B).
\]

Equivalently, $A \leq B$ if and only if $A \sse \overline{B}$ (where on the right we mean the topological closure of $B$ in the usual topology on $\R^d$).

This partial order makes good intuitive sense. It essentially says that $A \leq B$ if $A$ is a face of $B$ (in the usual geometric sense). The chambers are maximal elements in this partial order, and the origin is the unique minimal element. 

\begin{prop}
	Some facts:
	\begin{itemize}
		\item $A^2 = A$ for all $A \in \F$. (This is immediate from the more formal definition of the operation above, if it's not clear from the description in the first definition.)
		\item $ABA = AB$ for all $A,B \in \F$. 
		\item $AB = B$ if and only if $A \leq B$. 
	\end{itemize}
\end{prop}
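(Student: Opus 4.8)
The plan is to work entirely at the level of sign sequences. Since a face $A \in \F$ is uniquely determined by its sign sequence $\sigma(A) = \langle \sigma_H(A) \rangle_{H \in \A}$, and the product is specified coordinatewise through the formula for $\sigma_H(AB)$, each of the three claims reduces to verifying an identity in a single coordinate $\sigma_H$ for an arbitrary $H \in \A$. As $\sigma_H$ takes values in the three-element set $\{-,+,0\}$, every such verification is a short finite case analysis. Throughout I treat the product as a genuine binary operation on $\F$; that is, I take for granted (as the text already does in asserting associativity) that the resulting sign sequence really is that of a face, so that equality of sign sequences is equivalent to equality of faces.

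For the first fact, setting $B = A$ in the product formula gives $\sigma_H(AA) = \sigma_H(A)$ whether or not $\sigma_H(A) = 0$, so $AA$ and $A$ have the same sign sequence and hence $A^2 = A$. For the second fact, I would compute $\sigma_H(ABA)$ as $\sigma_H((AB)A)$ and compare it with $\sigma_H(AB)$, splitting on whether $\sigma_H(A) \neq 0$ and, in the remaining case, on whether $\sigma_H(B)$ is zero. In each branch one finds that post-composing $AB$ with $A$ changes nothing: the only coordinates at which the final application of $A$ can take effect are those with $\sigma_H(AB) = 0$, and on each of these one checks directly that $\sigma_H(A) = 0$ as well, so $\sigma_H(ABA) = \sigma_H(AB)$ for every $H$.

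The third fact is the one worth stating carefully, and I would prove both implications by comparing the defining condition of $A \leq B$ with the product formula coordinatewise. If $A \leq B$, then in each coordinate either $\sigma_H(A) = 0$, forcing $\sigma_H(AB) = \sigma_H(B)$, or $\sigma_H(A) = \sigma_H(B)$, again giving $\sigma_H(AB) = \sigma_H(A) = \sigma_H(B)$; so $AB = B$. Conversely, if $AB = B$, then in any coordinate with $\sigma_H(A) \neq 0$ the formula gives $\sigma_H(B) = \sigma_H(AB) = \sigma_H(A)$, which is precisely the condition defining $A \leq B$. None of these steps presents a genuine obstacle: the only things to be careful about are the bookkeeping of the zero-versus-nonzero cases in the product formula, and the (already-granted) closure of $\F$ under the operation, which is what lets me pass freely between sign sequences and the faces they name.
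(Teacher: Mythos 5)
Your proof is correct, and it follows exactly the route the paper gestures at (the paper states these as facts without proof, noting only that the first is \emph{immediate from the more formal definition}, i.e.\ the coordinatewise sign-sequence formula you use throughout). The three case analyses are all accurate, and you are right to flag that the only background assumption is closure of $\F$ under the product, which the paper also takes for granted.
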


The following is well-known:

\begin{thm}
	Every hyperplane monoid is $\mathcal{R}$-trivial.
\end{thm}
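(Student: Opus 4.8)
The plan is to exploit the sign-sequence description of both the operation and the partial order, and to show that left multiplication in $\F$ is always \emph{increasing} for the geometric order $\leq$. The single substantive step is the following monotonicity claim: $A \leq AB$ for every pair of faces $A,B \in \F$. This follows directly from the definitions. Fix a hyperplane $H \in \A$. If $\sigma_H(A) \neq 0$, then by the algebraic definition of the operation $\sigma_H(AB) = \sigma_H(A)$, so $\sigma_H(A) = \sigma_H(AB)$; and if $\sigma_H(A) = 0$, then the defining clause for $A \leq AB$ is met vacuously at $H$. Since $H$ was arbitrary, for every $H \in \A$ we have $\sigma_H(A) = 0$ or $\sigma_H(A) = \sigma_H(AB)$, which is precisely the condition $A \leq AB$.

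With the claim established, I would deduce $\mathcal{R}$-triviality using antisymmetry of $\leq$. Suppose $A\F = B\F$. Since the origin $\{0\}$ is the identity, $B = B\{0\} \in B\F = A\F$, so $B = AX$ for some $X \in \F$; the monotonicity claim then gives $A \leq AX = B$. By the symmetric argument, $A \in B\F$ yields $A = BY$ for some $Y \in \F$ and hence $B \leq BY = A$. Antisymmetry of the partial order forces $A = B$. Thus $A\F = B\F$ implies $A = B$, i.e.\ every $\mathcal{R}$-class is a singleton, and $\F$ is $\mathcal{R}$-trivial.

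An equivalent and perhaps more conceptual packaging of the same idea, which I might present instead, is to observe that the monotonicity claim says exactly that $A$ is a lower bound for its entire coset $A\F$; since $A \in A\F$, this makes $A$ the \emph{minimum} element of $A\F$. As the minimum of a subset of a poset is unique when it exists, the coset $A\F$ determines $A$, giving $\mathcal{R}$-triviality immediately.

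I do not expect a serious obstacle here: the whole argument rests on the one-line monotonicity lemma $A \leq AB$, and the remainder is formal poset reasoning. The only point requiring care is bookkeeping with the sign sequences to make sure the operation's defining cases line up exactly with the two clauses in the definition of $\leq$; once that is checked, the conclusion is automatic. (One could alternatively derive $A \leq AB$ from the already-stated facts $A^2 = A$ and ``$AB = B$ iff $A \leq B$'', but verifying it straight from the sign sequences is the most transparent route.)
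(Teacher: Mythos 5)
Your proof is correct. The paper states this theorem as well known and gives no proof of its own, so there is nothing to compare against; your argument is the standard one and is complete. The key inequality $A \leq AB$ does follow immediately from the sign-sequence definitions exactly as you check it (and, as you note, also from $A(AB) = A^2B = AB$ together with the stated fact that $AC = C$ iff $A \leq C$), and the deduction that $A\mathcal{F} = B\mathcal{F}$ forces $A \leq B \leq A$ and hence $A = B$ by antisymmetry is sound. Your repackaging of the claim as ``$A$ is the minimum of its coset $A\mathcal{F}$, so the coset determines $A$'' is a clean way to present it.
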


\section{
	\texorpdfstring{
		Linearity of $X(M)$, for geometric monoids
	}{
		\ref{sec:geometric_monoids}. Linearity of X(M), for geometric monoids
	}
}
\label{sec:geometric_monoids}

A simple observation is enough to ensure that $X(M)$ will never be linear when $M$ is defined in terms of generators and relations.

\begin{lem}	
	Suppose that $M$ is a monoid with a representation in terms of at least two generators ($a,b$) and some relations. Then $X(M)$ is not linear, as $aM$ and $bM$ are not comparable.
\end{lem}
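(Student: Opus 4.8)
The plan is to reduce non-linearity of $X(M)$ to the existence of a single incomparable pair, and then argue that the two generators $a$ and $b$ provide it. Recall that a poset fails to be linear exactly when it has two incomparable elements, so it suffices to prove that $aM$ and $bM$ are incomparable, i.e. that $aM \not\sse bM$ and $bM \not\sse aM$. By symmetry I only need to handle one direction.

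First I would record the reduction $aM \sse bM \iff a \in bM$ (the forward direction since $a \in aM$, the reverse since $a = bm_0$ forces $aM = bm_0M \sse bM$). So the heart of the matter is to show $a \notin bM$, which would mean $a = bw$ for some word $w$ in the generators; that is, the one-letter word $a$ is equal in $M$ to a word beginning with the letter $b$. To rule this out I would appeal to the word-extension description of the coset order: by Corollary \ref{cor:word_ext} (and the heuristic, made precise there, that $X(M)$ is just the poset of words ordered by end extension), $aM \sse bM$ holds if and only if $b$ is an initial subword of $a$. Since $a$ and $b$ are distinct generators, the single letter $a$ does not have the distinct letter $b$ as an initial subword; hence $aM \not\sse bM$, and symmetrically $bM \not\sse aM$, so the two cosets are incomparable and $X(M)$ is not linear.

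The step I expect to be the main obstacle is justifying that no relation in the presentation rewrites the generator $a$ into a word beginning with $b$ — in other words, that the generators are genuine atoms and not reducible to left-multiples of one another. This is precisely where the form of the defining relations enters: for the braid relations of a Coxeter system both sides of each relation have equal length, and the only non-homogeneous relation $\pi_i^2 = \pi_i$ can only shorten a word, so the shortest representative of the class of $a$ stays the single letter $a$; an analogous leading-coordinate argument handles the sign-sequence relations of the hyperplane face monoid. In a fully unrestricted presentation this can fail — a relation such as $a = ba$ would place $a \in bM$ — so the honest reading of the hypothesis is that the presentation is irredundant in this sense. Once that is secured, the incomparable cosets $aM$ and $bM$ witness the non-linearity of $X(M)$, which is exactly the claim.
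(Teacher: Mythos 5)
The paper states this lemma without any proof at all---it is asserted and immediately applied to the $0$-Hecke and hyperplane face monoids---so there is no argument of the paper's to compare yours against. Your reduction is the right one: $aM \sse bM$ if and only if $a \in bM$, so incomparability of the two cosets amounts to neither generator being expressible in $M$ as a word beginning with the other. More importantly, you have put your finger on a genuine defect in the statement itself: with ``some relations'' completely unrestricted the lemma is false, since the presentation $\grouppres{a,b}{a = ba}$ gives $a \in bM$ and hence $aM \sse bM$ (and a redundant presentation with $a = b$ in $M$, or with a generator equal to the identity, fails even more trivially). The lemma needs the implicit hypothesis that the presentation is irredundant in the sense that no generator lies in the left coset of another, and your proof is really a proof of that corrected statement together with a verification of the hypothesis for the two intended classes. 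That verification is sound where you spell it out: for $H_0(W)$ it is exactly Corollary \ref{cor:word_ext}, since the braid relations are length-homogeneous and $\pi_i^2 = \pi_i$ only shortens words, so the single letter $a$ can never acquire $b$ as an initial subword. The one place you are too brief is the hyperplane face monoid, which you dispatch with ``an analogous leading-coordinate argument''; the clean version is that $\sigma_H(AB) = \sigma_H(A)$ whenever $\sigma_H(A) \neq 0$, so $AM$ is contained in (in fact equals) the up-set of $A$ in the face order, whence $AM \sse BM$ forces $A \geq B$, and any non-trivial arrangement has incomparable faces. With that filled in, your argument is complete and is strictly more careful than what the paper provides.
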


\begin{cor}
	A $0$-Hecke monoid is never Ramsey, except in the degenerate case that the underlying Coxeter group has only one generator.
\end{cor}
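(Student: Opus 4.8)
The plan is to reduce the statement entirely to the preceding lemma together with Solecki's theorem, since almost all the work has already been done. Recall that $H_0(W)$ is, by definition, generated by the idempotents $\pi_1, \dots, \pi_n$, where $n$ is the number of Coxeter generators $s_1, \dots, s_n$ of $W$, and that we have already recorded that $H_0(W)$ is $\mathcal{R}$-trivial, hence in particular almost $\mathcal{R}$-trivial. Thus Solecki's theorem applies and says that $H_0(W)$ is Ramsey if and only if $X(H_0(W))$ is linear, so the entire question reduces to deciding linearity of $X(H_0(W))$.

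First I would handle the non-degenerate case, in which $W$ has at least two generators $s_1, s_2$. Then $H_0(W)$ has a presentation with at least two generators $\pi_1, \pi_2$, so the preceding lemma applies directly and shows that $\pi_1 H_0(W)$ and $\pi_2 H_0(W)$ are incomparable in $X(H_0(W))$. Concretely, by Corollary \ref{cor:word_ext} one has $\pi_1 \in \pi_1 H_0(W)$ but $\pi_1 \notin \pi_2 H_0(W)$, and symmetrically for $\pi_2$, since neither generator is an initial subword of the other. Hence $X(H_0(W))$ is not linear, and Solecki's theorem immediately yields that $H_0(W)$ is not Ramsey.

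It then remains to confirm that the single-generator case is a genuine exception, that is, that there $H_0(W)$ really is Ramsey. In that case $W = \grouppres{s_1}{s_1^2}$, so $H_0(W) = \{1, \pi_1\}$ with $\pi_1^2 = \pi_1$, giving $1 \cdot H_0(W) = \{1, \pi_1\}$ and $\pi_1 H_0(W) = \{\pi_1\}$. These two cosets form a chain of length $2$, so $X(H_0(W))$ is linear and, by Solecki's theorem, $H_0(W)$ is Ramsey, confirming that this is the unique exceptional case.

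There is really no hard step in this argument: the combinatorial content is entirely carried by the preceding lemma, and the translation into a Ramsey (non-)statement is exactly Solecki's theorem. The only points requiring any care are the bookkeeping in the degenerate case, namely checking that a one-generator Coxeter group produces a linear $X(M)$, and verifying that the hypothesis of Solecki's theorem is satisfied, which is guaranteed by the $\mathcal{R}$-triviality of $H_0(W)$ noted earlier.
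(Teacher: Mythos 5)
Your proof is correct and follows essentially the same route as the paper: the non-degenerate case is exactly the preceding lemma (two generators give incomparable cosets $\pi_1 M$ and $\pi_2 M$) combined with Solecki's theorem via $\mathcal{R}$-triviality of $H_0(W)$. Your explicit verification that the one-generator case yields the linear two-element chain $X(M) = \{M, \{\pi_1\}\}$ is a small addition the paper leaves implicit, but it is the same argument.
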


\begin{cor}
	A (non-trivial) hyperplane face monoid is never Ramsey.
\end{cor}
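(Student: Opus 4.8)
The plan is to reduce everything to Solecki's theorem: since a hyperplane face monoid $M = \F$ is $\mathcal{R}$-trivial (hence almost $\mathcal{R}$-trivial), $M$ is Ramsey if and only if $X(M)$ is linear, so it suffices to exhibit two $\sse$-incomparable left cosets whenever the arrangement $\A$ is non-trivial. I would do this by first identifying $X(M)$ explicitly from the algebraic description of the operation, rather than appealing to a presentation by generators.

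First I would show that for any face $A \in \F$, the left coset $AM$ is exactly the principal up-set $\set{B \in \F}{A \leq B}$. One inclusion is immediate from the formula $\sigma_H(AB) = \sigma_H(A)$ whenever $\sigma_H(A) \neq 0$: this says $AB$ agrees with $A$ on every coordinate where $A$ is nonzero, i.e. $A \leq AB$, so $AM \sse \set{B}{A \leq B}$. The reverse inclusion is the fact recorded in the Proposition that $AB = B$ precisely when $A \leq B$: any $B \geq A$ satisfies $B = AB \in AM$. Hence $AM$ is the up-set of $A$.

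From this, comparability in $X(M)$ reverses the face order: $AM \sse BM$ if and only if $\set{C}{A \leq C} \sse \set{C}{B \leq C}$, which holds if and only if $B \leq A$ (using $A \in AM$ for the forward direction). Thus $(X(M), \sse)$ is order-isomorphic to the order-dual of the face poset $(\F, \leq)$, and so $X(M)$ is linear exactly when $(\F, \leq)$ is a chain.

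It then remains to observe that a non-trivial arrangement never has a chain for its face poset. Non-triviality means $\A$ contains at least one hyperplane $H$; since the arrangement is central, $H$ separates $\R^d$ and there is at least one chamber on each of its two sides, giving two distinct chambers $C_1 \neq C_2$. Chambers are the maximal faces, so $C_1 M = \{C_1\}$ and $C_2 M = \{C_2\}$ are distinct singleton cosets, which are $\sse$-incomparable; equivalently, these two chambers play the role of the incomparable generators in the preceding Lemma. Therefore $X(M)$ is not linear and $M$ is not Ramsey. The only genuinely delicate point is the bookkeeping around the word ``non-trivial'': one must confirm that the standing assumption $\bigcap_{H \in \A} H = \{0\}$ still permits at least one hyperplane (and hence two chambers on opposite sides of it) in every case other than the one-point monoid, which is where I would spend the most care.
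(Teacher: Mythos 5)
Your proof is correct, but it takes a genuinely different route from the paper. The paper derives this corollary from the preceding Lemma: a monoid presented with at least two generators $a,b$ has $aM$ and $bM$ incomparable, so $X(M)$ is not linear, and Solecki's theorem (plus $\mathcal{R}$-triviality of $\F$) finishes it. You instead bypass generators entirely and compute $X(M)$ outright: from $\sigma_H(AB)=\sigma_H(A)$ when $\sigma_H(A)\neq 0$ you get $A\leq AB$, and from ``$AB=B$ iff $A\leq B$'' you get the reverse inclusion, so $AM$ is the principal up-set of $A$ and $(X(M),\sse)$ is the order-dual of the face poset $(\F,\leq)$; two distinct chambers (which exist once $\A$ contains a hyperplane, since $x$ and $-x$ lie in different chambers for generic $x$) then give two incomparable singleton cosets. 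What your approach buys is a complete identification of $X(M)$ --- strictly more information than non-linearity --- and it avoids leaning on the paper's Lemma, which as stated for arbitrary monoids with two generators is not literally true without some irredundancy hypothesis on the generating set (e.g., relations can force $aM\sse bM$); your argument is self-contained for this specific family. What the paper's route buys is brevity and uniformity: the same one-line Lemma dispatches both the $0$-Hecke monoids and the hyperplane face monoids. Your handling of the non-triviality bookkeeping ($\A=\emptyset$ forces $\F$ to be the one-point monoid under the centrality convention) is also correct and is exactly the degenerate case the word ``non-trivial'' is excluding.
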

\section{
	\texorpdfstring{
		Conclusion
	}{
		\ref{sec:conclusion}. Conclusion
	}
}
\label{sec:conclusion}

\subsection{
	\texorpdfstring{
		Overall picture
	}{
		\ref{subsec:overall_picture}. Overall picture
	}
}
\label{subsec:overall_picture}

In an effort to understand $X(M)$ we have looked at naturally occurring almost $\mathcal{R}$-trivial monoids $M$ from two perspectives. In Section \ref{sec:Comparability} we leveraged function representation theorems, and saw that when the monoid is represented as a suitably rich collection of functions, then the corresponding $X(M)$ is only linear in very small cases.

Alternatively, in Section \ref{sec:two_other_monoids} we looked at two geometric monoids defined in terms of generators and relations. Having two distinct generators immediately prevents $X(M)$ from being linear, hence $M$ cannot be Ramsey.

\subsection{
	\texorpdfstring{
		Compression of information
	}{
		\ref{subsec:compression}. Compression of information
	}
}
\label{subsec:compression}

It is natural to step back and ask if there is a meta-reason \emph{why} these geometric monoids failed to be Ramsey, but monoids that have been extracted from existing Ramsey theorems have linear $X(M)$.

In a sense, the monoid action on a set $X$ is a compression of the information in an element of $X$; when one acts by elements ``further from the identity'', one loses more information in the process. For example, in the context of Gowers' $\text{FIN}_k$ theorem, the monoid $\{1, T, T^2, \dots, T^k = 0\}$ acts on the set $[k] = \{0, 1, \dots, k\}$ and in turn acts coordinatewise on functions from $\N$ to this set, where $T$ is the usual tetris operation mentioned in Section \ref{subsec:Catalan}. Acting on a function with a ``more complex'' element of this monoid compresses it more, usually cutting off some amount of its support. In a notable application of Gowers' theorem to the Banach space $S_{c_0}$, see \cite{gowers1992lipschitz} or \cite[Theorems 2.22, 2.37]{todorcevic2010introduction}, the space is first discretized to a $\delta$-net of elements of $c_{00}$ that is naturally isomorphic to some $\text{FIN}_k$. After this identification, the tetris operation $T$ is seen to act coordinatewise by scalar multiplication by a number less than 1---a literal compression of basis vectors. 

In a general $X(M)$, the longer the word $w \in M$ is, the smaller $w M$ is. That is, it adds more letters to each word, in turn compressing the set of resulting words. See Figure \ref{fig:complexity}.

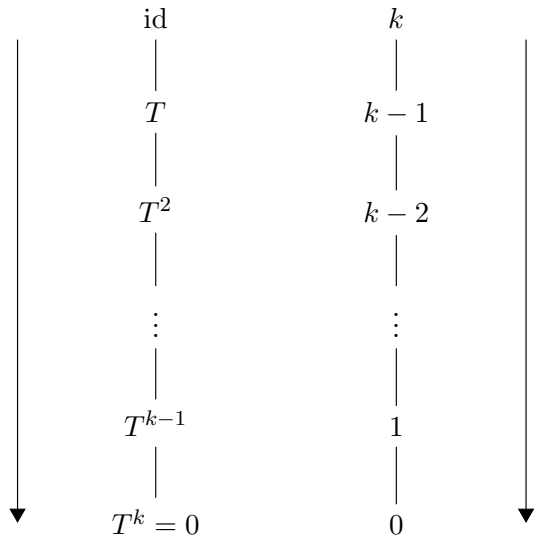
\begin{figure}[h]
\[
\tikzcdset{arrow style=tikz, diagrams={>=triangle 60}}
\begin{tikzcd}
   	{} \arrow[scale=10]{ddddd} & \text{id} \arrow[dash]{d} & & k \arrow[dash]{d} & \, \arrow{ddddd}\\
	{} & T \arrow[dash]{d}         & & k-1 \arrow[dash]{d} & \, \\
	{} & T^2 \arrow[dash]{d}       & & k-2 \arrow[dash]{d} & \, \\
	{} & \vdots \arrow[dash]{d}    & & \vdots \arrow[dash]{d} & \, \\
	{} & T^{k-1} \arrow[dash]{d}   & & 1 \arrow[dash]{d} & \, \\
	{} & T^k = 0   & & 0  & \,  
\end{tikzcd}
\]
\caption{The words of the monoid with $k$ repeated Tetris operations on the left, and the elements $wM$ on the right (suppressing the 
$M$).}
\label{fig:complexity}
\end{figure}

When $X(M)$ is not linear, we might interpret $M$ as containing two or more different measures of compression or complexity. For example, in a monoid defined with two non-commuting generators $a,b$, $a M$ is not comparable to $b M$, so we can think of $a$ and $b$ as being different, incompatible forms of compression. In this setting, acting by each generator causes a loss of different, orthogonal forms of information in $M$.

\bigskip

This soft, heuristic discussion hints at what type of monoids might yield useful applications from Solecki's Ramsey theorem machine; we should look for monoids that measure complexity or compression of information. Monoids like the 0-Hecke monoids and the hyperplane face monoids are more about capturing the geometric notions of reflections. Alternatively, viewing these geometric monoids as a measure of compression in some appropriate way might lead to deeper understandings for reflection groups. 


In order to imitate the above application of Gowers' theorem in the context of monoids, we would need to consider the monoid as a discrete version of a continuous object. In this generalized setting, the monoid action should correspond to a non-commutative scalar multiplication. We provide this ``continuous'' version of the monoid. 

\begin{dfn} Let $M$ be a finite monoid with generators $\{a,b\}$, such that $m = \min\{i > 1 : a^i = a\}$, $n = \min\{i > 1 : b^i = b\}$. Let $C(M)$ be the collection of all finite words of the form:
\[
	(c_1 e_a)(c_2 e_b)(c_3 e_a) \ldots \text{ or } (c_1 e_b)(c_2 e_a)(c_3 e_b) \ldots
\]
where $c_i \in [0,1]$ and
\begin{itemize}
	\item $(c_1 e_x)(c_2 e_x) = (c_1 c_2) e_x$, for $x =a,b$.
	\item $(0 e_x)(e_y) = e_y$, for $x \neq y$.
\end{itemize}

There is an $f : \{m,n\} \rightarrow [0,1]$ such that
\begin{itemize}
	\item $f(m)^{i} \not\approx 0$ for $i < m$, but $f(m)^{m} \approx 0$,
	\item $f(n)^{i} \not\approx 0$ for $i < n$, but $f(n)^{n} \approx 0$.
\end{itemize}
Here ``$\approx 0$'' is meant to be ``approximately 0'', in a suitable context. For example, we can take $f(n) = (1 + \delta_n)^{-i}$, where $0 < \delta_n < 1$ is a number such that $(1 + \delta_n)^{n-1} = \delta$.

Let $M$ act on $C(M)$ by:
\begin{itemize}
	\item $a \cdot (c e_b) = 0 e_b$, and $b \cdot (c e_a) = 0 e_a$,
	\item $a \cdot (c e_a) = f(m) c e_a$, and $b \cdot (c e_b) = f(n) c e_b$.
\end{itemize}
\end{dfn}

In this setting, $C(M)$ is the analogue to $[0,1]$, and (eventually minimal) sequences of elements of $C(M)$ are the analogue to $c_{00}$. Notably, when $M$ is the monoid of the $k$ Tetris operations on $[k]$, then $C(M)$ is precisely the $\delta$-net that appears in the application of Gowers theorem, see \cite[p.37]{todorcevic2010introduction}.

Conversely, if a problem we are considering naturally involves a sequence of relatively simple monoid actions on sets, Solecki's machine may yield worthwhile results, even if the monoid in question is not Ramsey.

The result that so many of these monoids are not Ramsey is still interesting in its own right, as it implies the existence of some interesting colourings.



\bibliographystyle{plain}
\bibliography{references}

\end{document}